\numberwithin{equation}{section} 
\numberwithin{figure}{section} 
\theoremstyle{plain}
\newtheorem*{thm*}{Theorem}
\theoremstyle{plain}
\newtheorem{thm}{Theorem}[section]
\theoremstyle{definition}
\newtheorem{defn}[thm]{Definition}
\theoremstyle{plain}
\newtheorem{lem}[thm]{Lemma}
\theoremstyle{plain}
\newtheorem{prop}[thm]{Proposition}
\theoremstyle{plain}
\theoremstyle{remark}
\newtheorem{rem}[thm]{Remark}
\theoremstyle{remark}
\theoremstyle{remark}
\newtheorem*{acknowledgement*}{Acknowledgement}
\begin{document}

\title[NEW CLASSES OF PROJECTIVELY RELATED FINSLER METRICS OF CONSTANT FLAG CURVATURE]{NEW CLASSES OF PROJECTIVELY RELATED FINSLER METRICS OF CONSTANT FLAG CURVATURE}

\author[Cre\c tu]{Georgeta Cre\c tu}
\address{Faculty of  Mathematics \\ Alexandru Ioan Cuza University \\ Ia\c si, 
  Romania}
\email{cretuggeorgeta@gmail.com}
\date{\today}

\begin{abstract}
We define a  Weyl-type curvature tensor of $(1,2)$-type to provide a characterization for Finsler metrics of constant flag curvature. This Weyl-type curvature tensor is projective invariant only to projective factors that are Hamel functions. Based on this aspect we construct new families of projectively related Finsler metrics that have constant flag curvature. 
\end{abstract}
\subjclass[2000]{53C60, 53B40}

\keywords{projectively flat Finsler metrics, Weyl-type curvature tensor, constant flag curvature, Hamel function}

\maketitle

\section{Introduction}
One of the fundamental problems in Finsler geometry is to study and classify Finsler
metrics of constant (or scalar) curvature. Many Finslerian geometers have made effort to
study Finsler metrics of constant (or scalar) curvature \cite{Ba, Mo}.
Inspired by the Weyl-type curvature tensor of $(1,1)$-type  $W_0$  introduced in \cite{bc} and by the link between the curvature tensor associated to the nonlinear connection and the Jacobi endomorphsim, we define a new Weyl-type curvature tensor,  of $(1,2)$-type denoted $W_1$. Using the new tensor we provide a characterization for Finsler metrics of constant flag curvature. We treat separately the $2$-dimensional case in Theorem \ref{cfc-dim2} and the dimension greater than $3$ in Theorem \ref{thm:cfc1}. We study what happens with the projective Weyl-curvature tensor $W_1$ if we make a projective deformation of the initial spray. Similar to the tensor introduced in \cite{bc} we obtain that $W_1$ is invariant only to  projective factors that are Hamel functions. The main aim of this paper is to construct some families of projectively related Finsler metrics that preserve the Weyl-type curvature tensor $W_1$. \\
Due to the properties of projectively flat metrics, we choose, as a starting point for our study a family of projectively flat Randers metrics whose projective factor is proportional to the metric. Using the definition of a Randers metric we recover the result from \cite{CH} which states that a Randers metric $F=a+b$ is projectively flat if and only if $a$ is a projectively flat  Riemannian metric and $b$ is given by a closed $1$-form, \cite{Mo1}. 
In order to find the Randers metric with the property mentioned above we provide a characterization for projectively flat Finsler metrics that are reducible to a Riemannian metric, Theorem \ref{flat:riemm}. 
In the last part of this paper, inspired by \cite{ZSh}, we use different deformations of projectively flat Randers metrics in order to find new classes of projectively related Finsler metrics that preserve the property of having constant flag curvature.\\
The first family of metrics, \eqref{samecurvature}, was obtained through a Randers-type deformation of the projectively flat Randers metric \eqref{Finsler:m}. The most important aspect that deserves to be mentioned here is that the projective factor associated to the deformation mentioned above is linear and therefore is a Hamel function. Hence according to Proposition \ref{rem:curvature}, it follows that the metric obtained through this deformation is of constant flag curvature. Moreover, by computing the flag curvature we noticed that it is negative and equal to the one of the initial metric.
The second and the third family of metrics constructed in Subsection \ref{zero:curv} are both of zero flag curvature. The construction of the first family of metrics of zero flag curvature, \eqref{Berwald}, was inspired by the definition of square metrics, \cite{SH}, which plays a particular role in Finsler geometry, just as Randers metrics.
They are expressed by $F=\dfrac{(a+b)^2}{a}$, where $a$ and $b$ are the quantities from the definition of a Randers metric.  For the second construction we were inspired by the definition of conformal changes of Finsler spaces, which has been initiated by M. S. Knebelman, \cite{knebelman}.
The common point of the new metrics from Subsection \ref{zero:curv} is the fact that the projective factor obtained through the deformations mentioned above is proportional to the initial metric.
\section{Preliminaries}
\label{Intr}
We consider $M$ a connected, smooth, real and n-dimensional manifold. In this work, all geometric structures are smooth.
We denote by $C^\infty(M)$ the set of smooth functions on $M$, by $\mathfrak{X}(M)$ the set of vector fields on $M$ and by $\Lambda^k(M)$ the set of k-forms on $M$.
Local coordinates on $M$ are denoted by $(x^i)$, while induced local coordinates on the tangent bundle $TM$ are denoted by $(x^i, y^i)$ for $i=\bar{1,n}.$\\ We denote by $T_0M$, the tangent bundle with the zero section removed.
On $TM$ there are two canonical structures that we will use further, the Liouville vector field and the tangent endomorphism given in local coordinates by
\begin{equation}
\mathcal{C}=y^i\frac{\partial}{\partial y^i},\ J=dx^i\otimes\frac{\partial}{\partial y^i}.
\end{equation}
A system of second order ordinary differential equations on $M$,
\begin{equation}\label{geodesic-system}
\frac{d^2x^i}{dt^t}+2G^i\left(x,\frac{dx}{dt}\right)=0,
\end{equation}
can be identified with a special vector field on $TM$
\begin{equation}
S=y^i\frac{\partial}{\partial x^i}-2G^i(x,y)\frac{\partial}{\partial y^i},
\end{equation}
which satisfies $JS=\mathcal{C}$. This vector field is called a semispray. If additionally, $S\in \mathfrak{X}(T_0M)$ and satisfies $[\mathcal{C},S]=S$ we say that $S$ is a spray.\\
If we reparameterize the second-order system \eqref{geodesic-system}, by preserving the orientation of the
parameter, we obtain a new system and hence a new spray $\bar{S}=S-2P\mathcal{C}$.
The function $P\in C^\infty(T_0M)$ is 1-homogeneous, which means that it satisfies $\mathcal{C}(P)=P$ and it is related to the new parameter by
 \begin{equation}
\frac{d^2\bar{t}}{dt^2}=2P\left(x^i(t),\frac{dx^i}{dt}\right)\frac{d\bar{t}}{dt},\ \frac{d\bar{t}}{dt}>0. 
\end{equation}  
The two sprays S and $\bar{S}$ are called projectively related, while the function P is called
a projective deformation of the spray S. 
Every spray induces  a canonical nonlinear connection through the corresponding horizontal and vertical projectors, \cite{Grifone72a},
\begin{equation}
h=\frac{1}{2}(Id-[S,J]),\ v=\frac{1}{2}(Id+[S,J]).
\end{equation}
Locally, the two projectors $h$ and $v$ can be expressed as follows
\begin{equation}
h=\frac{\delta}{\delta x^i}\otimes dx^i,\ v=\frac{\partial}{\partial y^i}\otimes \delta y^i,
\end{equation} 
with
\begin{equation}
\frac{\delta}{\delta x^i}=\frac{\partial}{\partial x^i}-N_i^j(x,y)\frac{\partial}{\partial y^j},\ \delta y^i=dy^i+N_j^i(x,y)dx^j,\ N_j^i(x,y)=\frac{\partial G^i}{\partial y^j}(x,y).
\end{equation}
Alternatively, the nonlinear connection induced by a spray S can be characterized in
terms of an almost complex structure, \cite{Gr}
\begin{equation}
\mathbb{F}=h\circ[S,h]-J=\frac{\delta}{\delta x^i}\otimes \delta y^i-\frac{\partial}{\partial y^i}\otimes dx^i.
\end{equation}
For a spray $S$ consider the vector valued semi-basic 1-form
\begin{equation}
\Phi=v\circ[S,h]=R_j^i(x,y)\frac{\partial}{\partial y^i}\otimes dx^j,\ R_j^i=2\frac{\delta G^i}{\delta x^j}-S(N_j^i)+N_k^iN_j^k,
\end{equation}
which is be called the \emph{Jacobi endomorphism.}\\
Another important geometric structure induced by a spray $S$ is the curvature tensor $R$ associated to the nonlinear connection. It is the semi-basic 2 form
\begin{equation}\label{r}
R=-\frac{1}{2}[h,h].
\end{equation}
The curvature tensors $\Phi$  and $R$ are related by
\begin{equation}\label{relation:phi-R}
3R=[J,\Phi],\ \Phi=i_SR.
\end{equation} 
As we will see, important geometric information about the given spray
S are encoded in the Ricci scalar, $\rho\in C^\infty(T_0M)$ given by
\begin{equation}
\rho=\frac{1}{n-1}R_i^i=\frac{1}{n-1}Tr(\Phi).
\end{equation} 
\begin{defn}
A spray $S$ is said to be \textit{isotropic} if there exists a semi-basic 1-form $\alpha\in\Lambda^1(T_0M)$ such that the Jacobi endomorphism can be written as follows 
\begin{equation}\label{isotr}
\Phi=\rho J-\alpha\otimes \mathcal{C}.
\end{equation}
\end{defn}
In order to complete the geometric setting of a spray, we recall the Berwald connection. It is a linear connection on $T_0M$, for $X, Y \in \mathfrak{X}(T_0M),$ given by, \cite{BMiron}
\begin{equation}
D_XY=h[vX,hY]+v[hX,vY]+(\mathbb{F}+J)[hX,JY]+J[vX,(\mathbb{F}+J)Y].
\end{equation}
The action of the Berwald connection in the direction of the given spray $S$ provides
a tensor derivation on $T_0M$, which is called the dynamical covariant derivative define by
 $\nabla:\mathfrak{X}(T_0M)\rightarrow\mathfrak{X}(T_0M)$, given by
\begin{equation}
\nabla=h\circ\mathcal{L}_S\circ h+v\circ \mathcal{L}_S\circ v.
\end{equation}
For a spray $S$ and a function $L$ on $T_0M$, consider the following semi-basic 1-form, called the
Euler-Lagrange 1-form,
\begin{equation}\label{EL}
\delta_SL=\mathcal{L}_Sd_JL-dL=d_J\mathcal{L}_SL-2d_hL=\left\{S\left(\frac{\partial L}{\partial y^i}\right) - \frac{\partial L}{\partial x^i}\right\} dx^i.
\end{equation}
A $1$-homogeneous function that satisfies the Euler Lagrange equation $\delta_SL=0$ is called  \emph{Hamel function.}
\begin{defn}
By a \textit{Finsler function} we mean a continuous function $F : TM \rightarrow \mathbb{R}$
satisfying the following conditions:
\begin{enumerate}
\item $F$ is smooth and strictly positive on $T_0M$.
\item $F$ is positively homogeneous of order 1, which means that $F(x,\lambda y)=\lambda F(x,y)$, for all $\lambda> 0$ and $(x,y)\in TM.$
\item   The metric tensor with components
\[g_{ij}(x,y)=\frac{1}{2}\frac{\partial^2 F}{\partial y^i\partial y^j} \text{ has rank $n$ on $T_0M$}.\]
\end{enumerate}
\end{defn}
\begin{rem}
The regularity condition 3) of the previous definition is equivalent to the fact that the
Poincar\' e-Cartan 2-form of $F^2$, $\omega_{F^2} = -dd_JF^2$
, is non-degenerate and hence it is a
symplectic structure. Therefore, the equation
\begin{equation}\label{metrizable}
i_Sdd_JF^2=-dF^2
\end{equation}
uniquely determine a vector field $S$ on $T_0M$, which is called the \textit{geodesic spray}
of the Finsler function.
\end{rem}
\begin{defn}
A spray $S\in\mathfrak{X}(T_0M)$ is called \emph{ Finsler metrizable} if there is a Finsler function $F $ that satisfies \eqref{metrizable}. 
\end{defn}
We recall that in the Finslerian case we work with the notion of flag curvature. The notion of flag curvature extends to the Finslerian setting the concept of sectional curvature from the Riemannian
setting.
\begin{defn}
Consider $F$ a Finsler function and $\Phi$ the Jacobi endomorphism of its geodesic spray $S$. $F$ is said to be of\textit{ scalar (constant) flag curvature} if there exists a scalar function (constant) $\kappa$ on $T_0M$, such that
\begin{equation}
\Phi=\kappa F^2J-\kappa Fd_JF\otimes\mathcal{C}.
\end{equation}
Based on the relation between the Jacobi endomorphism and the curvature tensor $R$ written in \eqref{relation:phi-R} we get the following form for the curvature tensor associated to a Finsler function of scalar flag curvature
\begin{equation}\label{CFC-C}
R=\frac{1}{3F}d_J(\kappa F^3)\wedge J-d_J\bigg(\frac{1}{3F}d_J(\kappa F^3)\bigg)\otimes\mathcal{C}.
\end{equation}
\end{defn}
Moreover, if the Finsler function  is of constant flag curvature, then the curvature tensor has a simpler form given by
\begin{equation}\label{CFC}
R=\kappa Fd_JF\wedge J.
\end{equation}

\section{New Weyl-type curvature tensor} 
The main result of this section is a new characterization for Finsler metrics of constant flag curvature. This characterization  is based on a new Weyl-type curvature tensor inspired by the relation between the curvature tensor associated to the nonlinear connection and the Jacobi endomorphism. We will analyze first what happens in $dim M\geq 3$ in Theorem \ref{thm:cfc1} and we will treat separately the 2-dimensional case in Theorem \ref{cfc-dim2}.\\ Consider $S$ a geodesic spray with Jacobi endomorphism $\Phi$. We recall the Weyl-type curvature tensor $W_0$ introduced in \cite{bc}:
\begin{eqnarray}
W_0=\Phi - \frac{1}{n-1} \left(\operatorname{Tr} \Phi\right) J +
  \frac{1}{2(n-1)} d_J\left(\operatorname{Tr} \Phi\right) \otimes
  {\mathcal C}. \label{w0} \end{eqnarray}
Inspired by \eqref{relation:phi-R} and the techniques used in \cite[\S 8.3]{SLK14} we use the Weyl-type curvature tensor \eqref{w0} to introduce the following:
\begin{eqnarray*}
3W_1 &  =  & [J,W_0]=\left[J,\Phi-\frac{1}{n-1} \left(\operatorname{Tr} \Phi\right) J +
  					\frac{1}{2(n-1)} d_J\left(\operatorname{Tr} \Phi\right) \otimes
  					{\mathcal C}\right]\\ &  =  & \left[J,\Phi\right]-\left[J,\frac{1}{n-1} 											\left(\operatorname{Tr} \Phi\right) J\right]+\frac{1}{2(n-1)}
  					\left[J,d_J\left(\operatorname{Tr} \Phi\right) \otimes
  					{\mathcal C}\right]\\ & = & 
  					3R-\frac{1}{n-1}d_J\left(\operatorname{Tr} \Phi\right)\wedge J-\frac{1}{2(n-1)}d_J							\left(\operatorname{Tr} \Phi\right)\wedge\left[J,\mathcal{C}\right]\\& = &
  					3R-\frac{1}{n-1}d_J\left(\operatorname{Tr} \Phi\right)\wedge J-\frac{1}{2(n-1)}d_J							\left(\operatorname{Tr} \Phi\right)\wedge J\\&=&3R-\frac{3}{2(n-1)}d_J											\left(\operatorname{Tr} \Phi\right)\wedge J.
\end{eqnarray*} 
Consider $S$ a geodesic spray with Jacobi endomorphism associated $\Phi$ and curvature tensor $R$. We define the second  Weyl-type curvature tensor
\begin{equation}\label{w1}
W_1=R-\frac{1}{2(n-1)}d_J\left(\operatorname{Tr} \Phi\right)\wedge J.
\end{equation}

\begin{thm} \label{thm:cfc1}
A Finsler metric on a manifold of dimension greater than or equal to $3$ has constant flag curvature if and only if the Weyl-type tensor \eqref{w1} vanishes. 
\end{thm}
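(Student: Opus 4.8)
The plan is to prove both implications by relating the vanishing of $W_1$ to the isotropy condition \eqref{isotr} and then to constant flag curvature. First I would establish the easy direction: if the Finsler metric has constant flag curvature $\kappa$, then by \eqref{CFC} the curvature tensor is $R = \kappa F d_J F \wedge J$, so $\operatorname{Tr}\Phi = (n-1)\kappa F^2$ and hence $d_J(\operatorname{Tr}\Phi) = 2(n-1)\kappa F d_J F$. Substituting into \eqref{w1} gives $W_1 = \kappa F d_J F \wedge J - \tfrac{1}{2(n-1)} \cdot 2(n-1)\kappa F d_J F \wedge J = 0$ immediately.

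For the converse, suppose $W_1 = 0$, i.e. $R = \tfrac{1}{2(n-1)} d_J(\operatorname{Tr}\Phi)\wedge J$. Taking $i_S$ of this and using $\Phi = i_S R$ from \eqref{relation:phi-R}, together with $i_S J = \mathcal{C}$ and $i_S(d_J(\operatorname{Tr}\Phi)) = S(\operatorname{Tr}\Phi)$ wait — more carefully, $i_S d_J f = d_J f(S)$, and since $d_J f = \tfrac{\partial f}{\partial y^i} dx^i$ one gets $i_S d_J f = \mathcal{C}(f)$ by homogeneity considerations; here $\operatorname{Tr}\Phi$ is $2$-homogeneous so $\mathcal{C}(\operatorname{Tr}\Phi) = 2\operatorname{Tr}\Phi$. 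Contracting the wedge product $\alpha \wedge J$ with $S$ yields $(i_S\alpha) J - \alpha\otimes(i_S J) = (i_S\alpha)J - \alpha\otimes\mathcal{C}$. Thus
\begin{equation*}
\Phi = i_S R = \frac{1}{2(n-1)}\Big( 2(\operatorname{Tr}\Phi) J - d_J(\operatorname{Tr}\Phi)\otimes\mathcal{C}\Big) = \frac{1}{n-1}(\operatorname{Tr}\Phi) J - \frac{1}{2(n-1)} d_J(\operatorname{Tr}\Phi)\otimes\mathcal{C},
\end{equation*}
which is exactly the isotropy condition \eqref{isotr} with $\rho = \tfrac{1}{n-1}\operatorname{Tr}\Phi$ and $\alpha = \tfrac{1}{2(n-1)} d_J\rho = \tfrac{1}{2} d_J\rho$. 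So $W_1 = 0$ forces the geodesic spray to be isotropic; equivalently $F$ has scalar flag curvature $\kappa = \rho/F^2$.

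It remains to upgrade scalar flag curvature to \emph{constant} flag curvature, and this is where the dimension hypothesis $n \geq 3$ enters; I expect this to be the main obstacle. The strategy is to exploit that $W_1 = 0$ gives more than just isotropy: it pins down $R$ as $\tfrac{1}{2(n-1)} d_J(\operatorname{Tr}\Phi) \wedge J$, and one can then apply the Bianchi-type identity $d_h R = 0$ (the second Bianchi identity for the curvature of the nonlinear connection, or equivalently $[h, R] = 0$ after suitable manipulation). Writing $\operatorname{Tr}\Phi = (n-1)\rho$ with $\rho = \kappa F^2$, the closedness/horizontal-derivative condition applied to $R = \kappa F d_J F \wedge J$ produces, after using $d_J F^2 = 2F d_J F$ and the homogeneity of $\kappa$, a relation of the form $d_h\kappa \wedge (F d_J F \wedge J) = 0$ plus lower terms. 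In dimension $\geq 3$ the $3$-form structure $d\kappa \wedge \omega \wedge J$ (with $\omega$ semi-basic $1$-form, $J$ the vector-valued $1$-form) has enough room to conclude $d_h\kappa = 0$, and combined with the $0$-homogeneity of $\kappa$ and $d_v\kappa$ being controlled by the same identity, one gets $\kappa$ constant. This is the classical Schur-type lemma in the Finsler setting; the Weyl tensor $W_1$ is engineered precisely so that its vanishing encodes both the isotropy and the Schur rigidity at once. I would carry this out by contracting the identity $d_h R = 0$ with appropriate vector fields to isolate $S(\kappa)$ and $\tfrac{\delta\kappa}{\delta x^i}$, showing both vanish, and invoke that a $0$-homogeneous function on $T_0M$ that is horizontally and vertically constant along the spray is a constant.
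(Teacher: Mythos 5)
Your proof is essentially correct, but the converse is organized differently from the paper's, and the comparison is worth recording. Your forward direction is identical. For the converse, your first move --- contracting $R=\tfrac{1}{2(n-1)}d_J(\operatorname{Tr}\Phi)\wedge J$ with $S$ and using $\Phi=i_SR$ from \eqref{relation:phi-R} to read off the isotropy condition \eqref{isotr} with $\rho=\tfrac{1}{n-1}\operatorname{Tr}\Phi$ and $\alpha=\tfrac12 d_J\rho$ --- is clean and actually makes explicit something the paper leaves tacit (the paper invokes ``the fact that $S$ is isotropic'' without deriving it from $W_1=0$). Where you diverge is in how metrizability is used: you appeal to the known equivalence ``geodesic spray isotropic $\Leftrightarrow$ metric of scalar flag curvature'' (true, via self-adjointness of $\Phi$ with respect to $g$, but imported as a black box), whereas the paper extracts the needed proportionality directly and more elementarily: $d_hF^2=0$ gives $d_RF^2=0$, hence $d_J(\operatorname{Tr}\Phi)\wedge d_JF^2=0$, hence $d_J\rho=\kappa\, d_JF^2$ with $\kappa=\rho/F^2$ and $d_J\kappa=0$. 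Note that in your setup $d_J\kappa=0$ also drops out immediately by equating your two expressions for $\alpha$, namely $\tfrac12 d_J(\kappa F^2)=\kappa F d_JF$, which forces $F^2 d_J\kappa=0$; you should say this explicitly rather than gesturing at ``$d_v\kappa$ being controlled by the same identity,'' since Schur's lemma needs $\kappa=\kappa(x)$ as input. Finally, both arguments rest on the Finslerian Schur lemma for $n\ge 3$: the paper simply cites \cite{MM}, while you sketch its proof via the Bianchi identity $d_hR=0$. Your sketch is the standard argument and correctly locates where the dimension hypothesis enters, but as written (``plus lower terms,'' ``has enough room to conclude'') it is an outline, not a proof; either carry out the contraction argument in full or cite the lemma as the paper does.
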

\begin{proof}
We start the direct implication with the assumption that $F$ has constant flag curvature. It follows that the geodesic spray $S$ is isotropic. Hence the curvature tensor can be written in the following form using \eqref{CFC}:
\begin{equation}\label{rtens}
R=\frac{1}{2}d_J(\kappa F^2)\wedge J=\frac{1}{2}d_J\rho\wedge J.
\end{equation}
Because the spray $S$ is Finsler metrizable by a function of constant flag curvature it follows that 
$\rho=\frac{Tr(\phi)}{n-1}$ and therefore the curvature tensor can be written in the following form
\begin{equation}\label{r1tens}
R=\frac{1}{2(n-1)}d_J\left(\operatorname{Tr}\Phi\right)\wedge J.
\end{equation}
Using the relation \eqref{r1tens} and the definition for the Weyl-type curvature tensor  $W_1$ we get that $W_1$ vanishes for a Finsler function of constant flag curvature.\\
For the converse, we assume that the Weyl-type curvature tensor \eqref{w1} vanishes, so the curvature tensor $R$ is given by formula \eqref{r1tens}.
Since $S$ is metrizable it follows that it satisfies the equation \eqref{metrizable} that is equivalent to   the Euler Lagrange equation $\delta_SF^2=0$. The last relation can be written as
\begin{equation}\label{d:f}
\delta_SF^2=d_JSF^2-2d_hF^2=0.
\end{equation}
Due to the fact that $F$ is the Finsler function that metricizes the geodesic spray $S$ from \eqref{d:f} we have
\begin{equation}\label{e-l}
d_hF^2=0.
\end{equation} 

From \eqref{e-l} and \eqref{r} we get 
\begin{equation}\label{der_r}
d_RF^2=0.
\end{equation}
Replacing the curvature tensor from \eqref{r1tens} in \eqref{der_r} we will obtain
\begin{equation}
\displaystyle d_{\frac{1}{2(n-1)}d_J\left(\operatorname{Tr}\Phi\right)\wedge J}F^2=0.
\end{equation}
The previous derivative can be rewritten in the following form
\begin{equation}\label{deriv}
\frac{1}{2(n-1)}d_J\left(\operatorname{Tr}\Phi\right)\wedge d_JF^2=0.
\end{equation}
The fact that $S$ is isotropic ensures that \eqref{deriv} implies
\begin{equation}
d_J\rho\wedge d_JF^2=0,
\end{equation}
which leads to 
\begin{equation}\label{djrho}
d_J\rho=\kappa d_JF^2, \text{ for some function } \kappa.
\end{equation} 
By applying $i_S$ in \eqref{djrho} we obtain that this function is given by:
\begin{equation}
\kappa=\frac{\rho}{F^2}=\frac{Tr(\Phi)}{(n-1)F^2}.
\end{equation}
From \eqref{djrho} we can notice that
\begin{equation}
d_J\kappa=0.
\end{equation}
 It follows that $R$ is given by \eqref{CFC-C}  $\kappa$  is of scalar flag curvature. Since $d_J\kappa=0$ it follows that $\kappa$ is constant along the fibers of the tangent bundle and according to the Finslerian version of the Schur Lemma, \cite{MM},  we get that $\kappa$ is constant and hence the Finsler metric has constant flag curvature.
 \end{proof}  
 The problem that appears in the 2-dimensional case is that we cannot apply the Finslerian version of Schur Lemma. Hence, we are forced to add an extra condition in order to obtain the conclusion of the previous theorem. Since $2$-dimensional sprays are isotropic we can use the semi-basic $1$-form $\alpha$ from \eqref{isotr} to formulate the following result:
\begin{thm}\label{cfc-dim2}
A Finsler metric on a $2$-dimensional manifold  has constant flag curvature if and only if the following conditions are satisfied
\begin{enumerate}
\item The Weyl-type tensor \eqref{w1} vanishes.
\item $\displaystyle d_h\alpha=0$.
 \end{enumerate}
\end{thm}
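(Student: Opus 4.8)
The plan is to run the argument of Theorem~\ref{thm:cfc1} unchanged and to insert the second hypothesis at the single place where $\dim M\geq 3$ was used, namely the appeal to the Finslerian Schur lemma. Along the way the only genuine computation needed is the identity $d_hd_JF^2=0$, valid for every Finsler metric: since the geodesic spray is metrizable we have $d_hF^2=0$ as in \eqref{e-l}, and writing $\tfrac{1}{2}d_JF^2=g_{ij}y^j\,dx^i$ a short calculation (differentiating $d_hF^2=0$ in $y^i$ and using $(\partial g_{ij}/\partial y^m)y^j=0$) shows that $\frac{\delta}{\delta x^k}(g_{ij}y^j)=\Gamma^m_{ki}g_{mj}y^j$ with $\Gamma^m_{ki}=\partial^2G^m/\partial y^k\partial y^i$ symmetric in $k,i$, so the horizontal differential of $\tfrac{1}{2}d_JF^2$ vanishes.

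For the direct implication assume $F$ has constant flag curvature $\kappa$. Condition (1) follows exactly as in the proof of Theorem~\ref{thm:cfc1}: from \eqref{CFC} and $\kappa$ constant, $R=\kappa F\,d_JF\wedge J=\tfrac{1}{2}d_J(\kappa F^2)\wedge J=\tfrac{1}{2(n-1)}d_J(\operatorname{Tr}\Phi)\wedge J$, hence $W_1=0$; no use of the dimension is made here. For condition (2), scalar flag curvature forces $\Phi=\kappa F^2J-\kappa F\,d_JF\otimes\mathcal{C}$, so the semi-basic $1$-form in the isotropy decomposition \eqref{isotr} is $\alpha=\kappa F\,d_JF=\tfrac{\kappa}{2}d_JF^2$; with $\kappa$ constant, $d_h\alpha=\tfrac{\kappa}{2}d_hd_JF^2=0$ by the identity above.

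For the converse assume (1) and (2). Since every $2$-dimensional spray is isotropic, the proof of Theorem~\ref{thm:cfc1} goes through up to \eqref{djrho} and produces a function $\kappa$ with $d_J\rho=\kappa\,d_JF^2$, $\kappa=\rho/F^2$ and $d_J\kappa=0$; the last relation means $\partial\kappa/\partial y^i=0$, i.e.\ $\kappa=\kappa(x)$, and $F$ is of scalar flag curvature, $\Phi=\kappa F^2J-\kappa F\,d_JF\otimes\mathcal{C}$. This is precisely where, in low dimension, Schur's lemma is unavailable, and where hypothesis (2) enters: as before $\alpha=\tfrac{\kappa}{2}d_JF^2$, so $0=d_h\alpha=\tfrac{1}{2}(d_h\kappa)\wedge d_JF^2$ because $d_hd_JF^2=0$. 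As $\kappa$ depends only on $x$, $d_h\kappa=(\partial\kappa/\partial x^i)\,dx^i$, so $(d_h\kappa)\wedge d_JF^2=0$ reads $(\partial\kappa/\partial x^i)g_{jk}y^k=(\partial\kappa/\partial x^j)g_{ik}y^k$; differentiating in $y^l$ yields $(\partial\kappa/\partial x^i)g_{jl}=(\partial\kappa/\partial x^j)g_{il}$. If $d\kappa$ did not vanish at some point, all rows of $(g_{ij})$ would be proportional to a fixed nonzero one, contradicting $\operatorname{rank}(g_{ij})=2$. Hence $d\kappa\equiv0$, and $M$ being connected, $\kappa$ is constant, so $F$ has constant flag curvature.

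The main obstacle is structural rather than computational: one must locate the single point in Theorem~\ref{thm:cfc1} where $\dim M\geq 3$ was invoked, recognise that $d_h\alpha=0$ is equivalent---given metrizability and $d_hd_JF^2=0$---to $(d_h\kappa)\wedge d_JF^2=0$, and then extract the constancy of $\kappa$ from this wedge relation using only the nondegeneracy of the metric tensor. The auxiliary identity $d_hd_JF^2=0$ is the one place a calculation is unavoidable, and it is forced by $d_hF^2=0$ together with the $2$-homogeneity of $F^2$.
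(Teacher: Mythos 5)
Your proof is correct and follows essentially the same route as the paper: condition (1) is handled exactly as in Theorem \ref{thm:cfc1}, and condition (2) replaces the Schur lemma via the identification $\alpha=\tfrac{\kappa}{2}d_JF^2$ together with $d_hd_JF^2=0$. The only difference is that you make explicit the step the paper asserts without detail, namely extracting $d_h\kappa=0$ from $(d_h\kappa)\wedge d_JF^2=0$ by the rank argument on $(g_{ij})$, which is a welcome completion rather than a deviation.
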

\begin{proof} 
For the direct implication we assume that $F$ is a Finsler function of constant flag curvature. Since we are in the $2$-dimensional case it follows that the spray is isotropic and the curvature tensor is given by \eqref{CFC}. The information that $F$ is of constant flag curvature leads to
\begin{equation}
R=\frac{1}{2(n-1)}d_J(Tr\Phi)\wedge J,
\end{equation}
which ensures that the first condition of the theorem is satisfied.\\
For the second condition, the metrizability of the geodesic spray $S$ implies
\begin{equation}
d_hF^2=0\Rightarrow d_hd_JF^2=0\Rightarrow d_h\left(Fd_JF\right)=0\overset{\kappa=ct}{\Longrightarrow}d_h(\kappa Fd_JF)=0\Rightarrow d_h\alpha=0.
\end{equation} 
The previous relation together with the fact that $S$ is isotropic ensures that the second condition from the theorem is also satisfied.\\
For the converse we start with the assumption that both conditions of the theorem are satisfied and based on this assumption we prove that $F$ has constant flag curvature.
  First, the vanishing of the Weyl-type curvature tensor $W_1$ implies that
   \begin{equation}\label{d:Jk=0}
d_J\kappa=0.
\end{equation}
From  the fact that the flag curvature does not depend on the fiber coordinate  we obtain that \[\kappa:=\kappa(x).\] In the same time, the second condition of the theorem gives us the information that  \[d_h\kappa=0,\] which leads to the conclusion that $\kappa$ is constant. 
\end{proof}
Since the aim of this paper is to find new classes of projectively related Finsler metrics of constant flag curvature we study what happens with the Weyl-type curvature tensor $W_1$ if we make a projective deformation of the initial spray $S\rightarrow \bar{S}=S-2P\mathcal{C}.$ Therefore we have the following result:
\begin{lem}\label{lem:pw1}
Consider $S$ and $\bar{S}=S-2P{\mathcal C}$ two projectively related
sprays. The corresponding Weyl-type curvature tensors  $W_1$ are related by
\begin{eqnarray}
\overline{W_1}=W_1+\frac{1}{2}\delta_SP \wedge J+d_Jd_hP\otimes {\mathcal C}. \label{pw1}
\end{eqnarray}
\end{lem}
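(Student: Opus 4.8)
The plan is to expand $\overline{W_1}$ from the definition \eqref{w1}, $W_1 = R - \frac{1}{2(n-1)}d_J\left(\operatorname{Tr}\Phi\right)\wedge J$, by recording how each ingredient changes under the projective deformation $\bar S = S - 2P\mathcal{C}$. Since the tangent endomorphism $J$ is unchanged, only the curvature tensor $R$ and the Ricci-type scalar $\operatorname{Tr}\Phi$ need to be transformed. An equivalent and somewhat shorter route, parallel to the way \eqref{w1} was obtained from $W_0$, is to start from the identity $3W_1 = [J, W_0]$ established above: applying it to $\bar S$ gives $3\overline{W_1} = [J,\overline{W_0}]$, whence $3\bigl(\overline{W_1} - W_1\bigr) = \bigl[J, \overline{W_0} - W_0\bigr]$, and one then only needs the projective transformation rule for the $(1,1)$-type tensor $W_0$ from \cite{bc} together with the graded Leibniz rule for the Fr\"olicher--Nijenhuis bracket.

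Carrying out the first route, I would begin by recalling the projective-change formulas for the basic objects. The horizontal projector transforms by $\bar h = h + [P\mathcal{C}, J]$, so from $R = -\frac12[h,h]$ the curvature tensor picks up a correction assembled from $[h,[P\mathcal{C},J]]$ and $\tfrac12[[P\mathcal{C},J],[P\mathcal{C},J]]$; evaluating these Fr\"olicher--Nijenhuis brackets (equivalently, differentiating the coordinate relation $\bar G^i = G^i + Py^i$) yields $\bar R - R$. In parallel, the Jacobi endomorphism transforms by $\bar\Phi = \Phi + \psi\,J + \theta\otimes\mathcal{C}$, where $\psi$ is a function built from $P^2$ and $S(P)$ and $\theta$ is a semi-basic $1$-form built from $d_JP$ and $d_hP$ (a standard projective-change computation; see also \cite{bc}). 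Taking traces and using $\operatorname{Tr}J = n$, $\operatorname{Tr}(\theta\otimes\mathcal{C}) = i_S\theta$, and the homogeneity identities $\mathcal{C}(P) = P$, $i_S d_JP = P$, $i_S\delta_SP = 0$, one gets $\operatorname{Tr}\bar\Phi - \operatorname{Tr}\Phi$ and hence $d_J\bigl(\operatorname{Tr}\bar\Phi - \operatorname{Tr}\Phi\bigr)$ in closed form.

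Then I would substitute into
\[
\overline{W_1} - W_1 = \bigl(\bar R - R\bigr) - \frac{1}{2(n-1)}\,d_J\!\bigl(\operatorname{Tr}\bar\Phi - \operatorname{Tr}\Phi\bigr)\wedge J
\]
and simplify. The organising identities are $d_J^2 = 0$, $[J,J] = 0$, $[J,\mathcal{C}] = J$, the Leibniz rule turning $[J,\beta\otimes\mathcal{C}]$ for a semi-basic $1$-form $\beta$ into a $d_J\beta\otimes\mathcal{C}$ term plus a $\beta\wedge J$ term (the specialisation used implicitly in deriving \eqref{w1}), and, crucially, $\delta_SP = d_J(S(P)) - 2\,d_hP$ from \eqref{EL}, which lets one trade $d_J(S(P))$ for $\delta_SP + 2\,d_hP$. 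After these substitutions the contributions proportional to the function $\psi$ are designed to cancel between $\bar R - R$ and the trace term, and the remainder should reorganise into exactly $\frac12\,\delta_SP\wedge J + d_Jd_hP\otimes\mathcal{C}$.

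The step I expect to be the main obstacle is the Fr\"olicher--Nijenhuis bookkeeping in the transformation of $R$ (or of $\Phi$): several terms that at first sight ought to survive drop out because of the $1$-homogeneity of $P$ and the identity $i_S\delta_SP = 0$, and the coefficient $\tfrac{1}{2(n-1)}$ has to combine precisely with the $n$ coming from $\operatorname{Tr}J$ so that no residual $\psi J$-term is left over. A convenient guardrail throughout is the consistency check built into the statement: if $P$ is a Hamel function, i.e. $\delta_SP = 0$, then $d_hP = \tfrac12 d_J(S(P))$ and therefore $d_Jd_hP = \tfrac12 d_J^2(S(P)) = 0$, so both correction terms in \eqref{pw1} vanish and $\overline{W_1} = W_1$, in agreement with the behaviour of $W_0$ recalled in the introduction.
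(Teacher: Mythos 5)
Your first route is exactly the paper's proof: it substitutes the projective transformation laws for $R$ and $\Phi$ (which the paper simply quotes from \cite{BM12} rather than re-deriving via Fr\"olicher--Nijenhuis brackets) into the definition \eqref{w1}, uses $\operatorname{Tr}\overline{\Phi}=\operatorname{Tr}\Phi+(n-1)(P^2-SP)$, and lets the $Pd_JP\wedge J$ terms cancel while $\frac12\bigl(d_JSP-2d_hP\bigr)\wedge J$ reassembles into $\frac12\delta_SP\wedge J$. The argument is correct and essentially identical to the paper's.
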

\begin{proof}
Using the hypothesis that $\bar{S}$ and $S$ are two projectively related sprays, we have the following formula for the corresponding curvature tensors associated [2.2, \cite{BM12}].
\begin{eqnarray}
\overline{R} = R + d_Jd_hP\otimes\mathcal{C}+(Pd_JP-d_hP)\wedge J. \label{projr}
\end{eqnarray}
Moreover, for two projectively related sprays S and $\bar{S} = S-2P\mathcal{C}$, the corresponding Jacobi endomorphisms are related by the following formula 
\begin{equation}
\overline{\Phi}=\Phi+(P^2-SP)J-(Pd_JP+d_JSP-3d_hP)\otimes\mathcal{C}
\end{equation} 
Using \eqref{projr}  the Weyl-type curvature tensor $W_1$ can be written as follows:
\begin{eqnarray*}
\overline{W_1} &  =  &\overline{R}-\frac{1}{2(n-1)}d_J\left(\operatorname{Tr}\overline{\Phi}\right)								\wedge J\\ & = & R + d_Jd_hP\otimes\mathcal{C}+(Pd_JP-d_hP)\wedge J-\frac{1}								{2(n-1)}d_J(\left(\operatorname{Tr}\Phi\right)+(n-1)\left(P^2-SP)\right)\wedge J
						\\& = & R + d_Jd_hP\otimes\mathcal{C}+(Pd_JP-d_hP)\wedge J-\frac{1}											{2(n-1)}d_J\left(\operatorname{Tr}\Phi\right)\wedge J-\frac{1}{2}d_J\left(P^2-SP							\right)\wedge J\\ 
						& = &R-\frac{1}{2(n-1)}d_J\left(\operatorname{Tr}\Phi\right)\wedge J+ d_Jd_hP								\otimes\mathcal{C}+	Pd_JP\wedge J -d_hP\wedge J-Pd_JP\wedge J+\frac{1}{2}d_JSP								\wedge J
						\\& = & W_1+\frac{1}{2}\delta_SP \wedge J+d_Jd_hP\otimes {\mathcal C}.
\end{eqnarray*} and formula \eqref{pw1} is satisfied.
\end{proof} The previous lemma ensures that the Weyl-type curvature tensor $W_1$ is invariant  only to  projective factors of the deformation that satisfies $\delta_SP=0$ and hence is a Hamel function. As we already proved in \cite{bc} in the Riemannian case the projective factor $P$ is always a Hamel function due to its linearity. Therefore, if we make a projective deformation with a Hamel function of a Finsler metric of constant flag curvature the metric obtained through this deformation will have constant flag curvature as well. 
We want to make sure that the previous statement is true in the $2$-dimensional case. Hence, we analyze the second condition from Theorem \ref{cfc-dim2} under the hypothesis that $P$ is a Hammel function.
\begin{lem}\label{p:relatedFM}
Consider  $S $ and $\overline{S}=S-2P{\mathcal C}$ two projectively related isotropic sprays with the property that $P$ is a Hamel function. Then the derivatives with respect to the horizontal projector of the semi-basic $1$-forms $\alpha$ and $\overline{\alpha}$ are related by
\begin{equation}\label{p:r:dhalpha}
d_{\overline{h}}\overline\alpha=d_h\alpha-d_RP-Pd_J\alpha+\alpha \wedge d_JP.
\end{equation}
\end{lem}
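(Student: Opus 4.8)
The plan is to track the two objects that change under the deformation $S\mapsto\overline S=S-2P\mathcal{C}$: the isotropy $1$-form and the horizontal projector. By isotropy, $\Phi=\rho J-\alpha\otimes\mathcal{C}$ and $\overline\Phi=\overline\rho J-\overline\alpha\otimes\mathcal{C}$, so $\alpha,\overline\alpha$ are the semi-basic $1$-forms sitting in the $\mathcal{C}$-parts of these decompositions. Inserting the transformation law $\overline\Phi=\Phi+(P^2-SP)J-(Pd_JP+d_JSP-3d_hP)\otimes\mathcal{C}$ recalled in the proof of Lemma~\ref{lem:pw1} gives $\overline\alpha=\alpha+Pd_JP+d_JSP-3d_hP$. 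Here the Hamel hypothesis enters: $\delta_SP=0$ together with $\delta_SP=d_J\mathcal{L}_SP-2d_hP$ (formula \eqref{EL}) forces $d_J(SP)=2d_hP$, whence
\[
\overline\alpha=\alpha+Pd_JP-d_hP .
\]

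Next, from $\overline G^{i}=G^{i}+Py^{i}$ one gets $\overline N^{i}_{j}=N^{i}_{j}+P\delta^{i}_{j}+(\partial P/\partial y^{j})y^{i}$, i.e.\ $\overline h-h=[P\mathcal{C},J]=-PJ-d_JP\otimes\mathcal{C}$; since $K\mapsto d_{K}$ is linear, $d_{\overline h}=d_{h}-d_{PJ}-d_{d_JP\otimes\mathcal{C}}$. I would then evaluate the two correction operators on $\overline\alpha$. As $\overline\alpha$ is semi-basic, $i_J\overline\alpha=0$, so $d_{PJ}\overline\alpha=Pd_J\overline\alpha$; as $\overline\alpha$ is moreover $1$-homogeneous (it is the isotropy form of $\overline S$), $\mathcal{L}_\mathcal{C}\overline\alpha=\overline\alpha$ and $i_\mathcal{C}\overline\alpha=0$, so from $d_{\theta\otimes Z}\gamma=\theta\wedge\mathcal{L}_Z\gamma-(i_Z\gamma)\,d\theta$ (for a $1$-form $\gamma$) one gets $d_{d_JP\otimes\mathcal{C}}\overline\alpha=d_JP\wedge\overline\alpha$. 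Thus
\[
d_{\overline h}\overline\alpha=d_{h}\overline\alpha-Pd_J\overline\alpha+\overline\alpha\wedge d_JP .
\]

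Finally I would substitute $\overline\alpha=\alpha+Pd_JP-d_hP$ and expand using the Leibniz rule for the degree-one derivations $d_h,d_J$. The terms containing $\alpha$ regroup exactly into $d_h\alpha-Pd_J\alpha+\alpha\wedge d_JP$. In the remaining, purely $P$-dependent terms, the wedge squares $d_JP\wedge d_JP$ and the $d_J^2P$ terms vanish, the cross term $d_hP\wedge d_JP$ cancels, the combination $P(d_hd_JP+d_Jd_hP)$ vanishes because $[J,h]=0$, and the leftover $-(d_h\circ d_h)P$ equals $-d_RP$ by the curvature identity $d_h\circ d_h=d_R$. Adding the two groups yields \eqref{p:r:dhalpha}.

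I expect the middle paragraph to be the real work: one must make sure $\overline\alpha$ is genuinely semi-basic and $1$-homogeneous so that the insertions $i_J$, $i_\mathcal{C}$ annihilate it, and keep strict control of signs in $d_K=i_K\,d-d\,i_K$ when handling the two correction terms; the ensuing cancellations are then mechanical.
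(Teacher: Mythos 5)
Your proposal is correct and follows essentially the same route as the paper: both use $\overline{h}=h-PJ-d_JP\otimes\mathcal{C}$ and $\overline{\alpha}=\alpha+Pd_JP+d_JSP-3d_hP$, evaluate $d_{\overline h}\overline\alpha=d_h\overline\alpha-Pd_J\overline\alpha+\overline\alpha\wedge d_JP$ via semi-basicity and $1$-homogeneity of $\overline\alpha$, and then reduce the purely $P$-dependent remainder to $-d_RP$ using $\delta_SP=0$ together with $d_hd_JP+d_Jd_hP=d_{[J,h]}P=0$ and $d_h\circ d_h=d_R$. The only cosmetic difference is that you substitute the Hamel condition into $\overline\alpha$ at the outset (getting $\overline\alpha=\alpha+Pd_JP-d_hP$), whereas the paper carries the full expression through and invokes $\delta_SP=0$ at the end.
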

\begin{proof}
From \cite{BM12} we recall the following relation between the horizontal projectors associated to a projective deformation $\overline{S}=S-2P\mathcal{C}:$
\begin{equation}
\overline{h}=h-PJ-d_JP\otimes\mathcal{C}.
\end{equation} 
Moreover, the semi-basic $1$-forms $\overline{\alpha}$ and $\alpha$ are related by
\begin{equation}
\overline{\alpha}=\alpha+Pd_JP+d_JSP-3d_hP.
\end{equation}
Based on the two relations written above we obtain the following
\begin{equation}
\begin{aligned}\label{dha}
d_{\overline{h}}\overline{\alpha}&=d_{h-PJ-d_JP\otimes\mathcal{C}}\overline{\alpha}=d_h\overline{\alpha}-Pd_J\overline{\alpha}-d_JP\wedge \mathcal{L}_\mathcal{C}\overline{\alpha}=d_h\overline{\alpha}-Pd_J\overline{\alpha}-d_JP\wedge \overline{\alpha}\\
&=d_h\alpha-2d_hP\wedge d_JP+d_JSP\wedge d_JP-3Pd_Jd_hP+d_hd_JSP-3d_RP-Pd_J\alpha+\alpha \wedge d_JP\\
&=d_h\alpha+(d_JSP-2d_hP)\wedge d_JP-3Pd_Jd_hP+d_hd_JSP-3d_RP-Pd_J\alpha+\alpha \wedge d_JP\\
&=d_h\alpha+\delta_SP\wedge d_JP -3Pd_Jd_hP+d_hd_JSP-3d_RP-Pd_J\alpha+\alpha \wedge d_JP.
\end{aligned}
\end{equation}
By adding the condition that $P$ is  a Hamel function \eqref{dha} becomes
\begin{equation}\label{dha1}
\begin{aligned}
d_{\overline{h}}\overline{\alpha}&=d_h\alpha+2d_hd_hP-3d_RP-Pd_J\alpha+\alpha \wedge d_JP\\&=d_h\alpha-d_RP-Pd_J\alpha+\alpha \wedge d_JP,
\end{aligned}
\end{equation}
which is the conclusion of our lemma.
\end{proof}
\begin{prop}\label{rem:curvature}
We consider $F$ and $\overline{F}$ two projectively related Finsler metrics. If the initial metric $F$ is of constant flag curvature and the projective factor is a Hamel function then $\overline{F}$ is of constant flag curvature.
\end{prop}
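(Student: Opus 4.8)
The plan is to split the argument according to the dimension of the manifold, since Theorem~\ref{thm:cfc1} and Theorem~\ref{cfc-dim2} give different characterizations of constant flag curvature. In both cases the starting observation is that, by hypothesis, $F$ has constant flag curvature, so by the appropriate one of those two theorems the Weyl-type tensor $W_1$ of the geodesic spray $S$ vanishes. The projective factor $P$ relating $S$ and $\overline{S}=S-2P\mathcal{C}$ is assumed to be a Hamel function, i.e. $\delta_SP=0$.

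For $\dim M\geq 3$ I would argue as follows. Since $W_1=0$ and $\delta_SP=0$, Lemma~\ref{lem:pw1} gives
\begin{equation*}
\overline{W_1}=W_1+\tfrac{1}{2}\delta_SP\wedge J+d_Jd_hP\otimes\mathcal{C}=d_Jd_hP\otimes\mathcal{C}.
\end{equation*}
So I still need to show that the remaining term $d_Jd_hP\otimes\mathcal{C}$ vanishes. The key point is that $P$ being a Hamel function means $\delta_SP=0$, and from the Euler–Lagrange formula \eqref{EL} one has $\delta_SP=d_J\mathcal{L}_SP-2d_hP$; applying $d_J$ and using $d_J^2=0$ (together with $d_Jd_J\mathcal{L}_SP=0$) yields $d_Jd_hP=0$. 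Hence $\overline{W_1}=0$, and since $\overline{F}$ is a Finsler metric on a manifold of dimension $\geq 3$, Theorem~\ref{thm:cfc1} (applied to $\overline{S}$) gives that $\overline{F}$ has constant flag curvature.

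For $\dim M=2$ I would use Theorem~\ref{cfc-dim2}: I must verify both that $\overline{W_1}=0$ and that $d_{\overline{h}}\overline{\alpha}=0$. The first follows exactly as above. For the second, since $F$ has constant flag curvature in dimension $2$, the direct implication of Theorem~\ref{cfc-dim2} gives $d_h\alpha=0$; now Lemma~\ref{p:relatedFM} applies (both sprays are isotropic, being $2$-dimensional, and $P$ is Hamel) and yields
\begin{equation*}
d_{\overline{h}}\overline{\alpha}=d_h\alpha-d_RP-Pd_J\alpha+\alpha\wedge d_JP=-d_RP-Pd_J\alpha+\alpha\wedge d_JP.
\end{equation*}
Here I would exploit that constant flag curvature forces $\alpha=\kappa F\,d_JF$ with $\kappa$ constant, so that $d_J\alpha=\kappa\,d_JF\wedge d_JF=0$ and $\alpha\wedge d_JP=\kappa F\,d_JF\wedge d_JP$; moreover $R=\kappa F\,d_JF\wedge J$ by \eqref{CFC}, so $d_RP=\kappa F\,d_JF\wedge d_hP$ up to the expected sign bookkeeping, and combining these with $d_Jd_hP=0$ should collapse the right-hand side to zero. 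Once $d_{\overline{h}}\overline{\alpha}=0$ is established, the converse direction of Theorem~\ref{cfc-dim2} finishes the $2$-dimensional case.

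The main obstacle I anticipate is the $2$-dimensional bookkeeping in the last step: carefully matching signs and the action of $d_R$, $d_J$, and the wedge products on the semi-basic forms, and making sure the substitution $\alpha=\kappa F\,d_JF$ is used consistently with the isotropy decomposition \eqref{isotr}. The $\dim M\geq 3$ case is essentially immediate from Lemma~\ref{lem:pw1} once the identity $d_Jd_hP=0$ for Hamel functions is recorded; indeed that identity is implicit already in the remark following Lemma~\ref{lem:pw1}, so the only real work is the two-dimensional verification.
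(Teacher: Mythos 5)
Your proposal is correct and follows essentially the same route as the paper: Lemma~\ref{lem:pw1} combined with Theorem~\ref{thm:cfc1} in dimension $\geq 3$ (your explicit observation that $\delta_SP=0$ gives $2d_hP=d_J\mathcal{L}_SP$, hence $d_Jd_hP=\tfrac{1}{2}d_J^2(SP)=0$, is a welcome justification of a point the paper leaves implicit), and Theorem~\ref{cfc-dim2} together with Lemma~\ref{p:relatedFM} in dimension $2$. The only slip is your evaluation of $d_RP$: for $R=\alpha\wedge J$ one has $d_RP=\alpha\wedge d_JP$ (the insertion acts through $J$, not through $h$), and this is exactly the term that cancels $+\alpha\wedge d_JP$ in \eqref{p:r:dhalpha}, so that $d_{\overline{h}}\overline{\alpha}=0$ follows at once with no further sign bookkeeping.
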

\begin{proof}
For $dim\geq 3$ the proposition is true in view of Lemma \ref{pw1} and Theorem \ref{thm:cfc1}. The same lemma ensures that the first item of Theorem \ref{cfc-dim2} is true.
In order to complete the proof, we should verify if the second item of Theorem \ref{cfc-dim2} is true for a Finsler metric projectively related to a metric  $F$ of constant flag curvature under the assumption that the projective factor is a Hamel function.\\ 
The hypothesis that $F$ is of constant flag curvature provides the informations that $d_h\alpha=0$,\ $d_J\alpha=0$ and the curvature tensor $R$ is given by
\begin{equation}\label{R}
R=\alpha\wedge J
\end{equation} 
Further, using \eqref{R} we obtain
\begin{equation}\label{d:RP}
d_RP=d_{\alpha\wedge J}P=\alpha\wedge d_JP
\end{equation}
Using \eqref{d:RP} and the conclusions obtained from the fact that $F$ is of constant flag curvature in Lemma \ref{p:relatedFM} we will obtain that $d_{\overline{h}}\overline{\alpha}=0$.
\\Therefore according to Theorem \ref{cfc-dim2} it follows that the proposition is true in dimension two as well.
\end{proof}
Further, starting from a Finsler metric of constant flag curvature we try to construct a projectively related Finsler metric of constant flag curvature by imposing the condition that the projective factor is a Hamel function.
\section{A characterization for projectively flat Riemannian metrics of constant curvature}
The main subject of this section will be related to a special class of  Finsler metrics, namely Randers metrics, which arise from many areas in mathematics, physics and biology, \cite{An, Shi}. They are expressed in the form $F(x,y)=a+b$, where $a(x,y) =\sqrt{g_{ij}(x)y^iy^j}$ is a Riemannian metric and $b(x,y) = b_i(x)y^i$. Based on this aspect we will notice that by restricting the class of Randers metrics to the class of projectively flat Randers metrics, the main information will be provided by the Riemannian metrics involved. The starting point for studying projectively flat Finsler metrics was
Hilbert's 4th problem, that asks to study and characterize projectively flat Finsler metrics (with constant flag curvature) on an open domain in $\mathbb{R}^n$.
We recall that a Finsler metric is said to be projectively flat if the geodesics are straight lines as point sets.\\
Therefore, the geodesic spray of a projectively flat Finsler metric can be written as \begin{equation*}
S=S_0-2P\mathcal{C}, \text{ where } S_0=y^i\dfrac{\partial}{\partial x^i}\in\mathfrak{X}(\mathcal{U}\times\mathbb{R}) \text{ is the flat spray and $\mathcal{U}\subset\mathbb{R}^n$ is open and convex}.
\end{equation*}
A characterization for projectively flat Finsler metric was given by G. Hamel and it states that: 
A metric is projectively flat if and onlt if it satisfies the Hamel equation:\begin{equation}\label{proj:flat}\delta_{S_0}F=0.\end{equation}
In this case the projective factor $P(x,y)$ is given by
\begin{equation}
P(x,y)=\frac{S_0F}{2F}.\label{H2}
\end{equation}
Hamel's  characterization allows us to formulate the following result for projectively flat Randers metrics:
\begin{prop}\label{prflatR}
A Randers metric $F=a+b$ is projectively flat if and only if the Riemannian metric $a$ is projectively flat and the $1$-form $b_idx^i$ is closed.
\end{prop}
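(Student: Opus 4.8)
The plan is to derive everything from Hamel's criterion \eqref{proj:flat}, exploiting the linearity of the Euler--Lagrange operator $\delta_{S_0}$ together with a parity argument that keeps the contribution of the Riemannian part $a$ separate from that of the $1$-form $b$.

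First I would use that $\delta_{S_0}=\mathcal{L}_{S_0}d_J-d$ is $\mathbb{R}$-linear in its argument, so that by \eqref{proj:flat} the Randers metric $F=a+b$ is projectively flat if and only if $\delta_{S_0}a+\delta_{S_0}b=0$. For the second summand, a direct computation from $b=b_i(x)y^i$, using $\partial b/\partial y^i=b_i(x)$, gives $\delta_{S_0}b=\big(\partial_k b_i-\partial_i b_k\big)y^k\,dx^i$; in particular the components of $\delta_{S_0}b$ are polynomial (of degree one) in the fibre variables $y$, and $\delta_{S_0}b=0$ holds precisely when the $2$-form $d(b_i\,dx^i)$ vanishes, i.e. when $b_i\,dx^i$ is closed.

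The key observation is that the two summands cannot cancel each other unless each of them vanishes. Writing $a=\sqrt{g_{ij}(x)y^iy^j}$ one has $\partial a/\partial y^i=g_{ij}y^j/a$, and applying $S_0=y^k\partial/\partial x^k$ and $\partial/\partial x^i$ and clearing denominators yields an identity of the form $2a^3(\delta_{S_0}a)_i=Q_i(x,y)$ with each $Q_i$ polynomial in $y$ (here one uses that $a^2=g_{ij}y^iy^j$ is polynomial). Thus each component of $\delta_{S_0}a$ equals $a^{-1}$ times a function rational in $y$, while the components of $\delta_{S_0}b$ are rational in $y$. Since $g$ is a Riemannian metric and $\dim M\geq 2$, the quadratic form $g_{ij}y^iy^j$ is nondegenerate of rank at least two, hence it is not the square of a linear form, so $a$ is not a rational function of $y$. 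Consequently $\delta_{S_0}a+\delta_{S_0}b=0$ can hold only if $\delta_{S_0}a=0$ and $\delta_{S_0}b=0$ separately. Now \eqref{proj:flat} applied to the Riemannian metric $a$ says that $\delta_{S_0}a=0$ is exactly the condition that $a$ be projectively flat, and, as noted above, $\delta_{S_0}b=0$ is the closedness of $b_i\,dx^i$; this gives the direct implication. The converse is immediate, since if $a$ is projectively flat and $b_i\,dx^i$ is closed then $\delta_{S_0}a=0=\delta_{S_0}b$, whence $\delta_{S_0}F=0$ and $F$ is projectively flat. This recovers the result of \cite{CH}.

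I expect the only genuine obstacle to be making the separation step rigorous: one must argue carefully that $a$ is not a rational function of the fibre variables --- equivalently, that a nondegenerate quadratic form in at least two variables is never a perfect square --- and that $\delta_{S_0}a$ and $\delta_{S_0}b$ really do lie on the two sides of that dichotomy. Everything else is routine homogeneity bookkeeping and a straightforward application of \eqref{proj:flat} and \eqref{H2}.
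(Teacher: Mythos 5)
Your proposal is correct and follows essentially the same route as the paper: apply Hamel's criterion $\delta_{S_0}F=0$, use linearity to split it as $\delta_{S_0}a+\delta_{S_0}b=0$, identify $\delta_{S_0}b=0$ with closedness of $b_i\,dx^i$, and conclude that each summand must vanish separately. The one difference is that you explicitly justify the separation step (via the fact that $a=\sqrt{g_{ij}y^iy^j}$ is not a rational function of $y$ when the quadratic form has rank at least two, so the irrational term $\delta_{S_0}a$ cannot cancel the polynomial term $\delta_{S_0}b$), a point the paper's proof leaves implicit; your argument for it is sound, and the even/odd parity of the two summands under $y\mapsto -y$ that you allude to would give the same conclusion even more directly.
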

\begin{proof}
In order to prove this result, we use the characterization for projectively flat Finsler metrics provided by Hamel. Let us rewrite \eqref{proj:flat} for the Randers metric $F=a+b$. Therefore, we have
\begin{equation}\label{prf}
\delta_{S_0}(a+b)=0\Leftrightarrow \delta_{S_0}a+\delta_{S_0}b=0.
\end{equation}
Since $b$ is linear in the fiber coordinate it follows that $\delta_{S_0}b=0$ if and only if $b_idx^i$ is closed. Therefore, from \eqref{prf}  we can conclude that $F$ is projectively flat if and only if $a$ is projectively flat and $b_idx^i$ is closed. 
\end{proof}
Proposition \ref{prflatR} indicates that the study of projectively flat Randers metrics should start with some characterizations concerning projectively flat Riemannian metrics.
Using the Levi Civita equations, \cite{mikes}, we will formulate a charaterization for projectively flat Finsler metrics that are reducible to Riemannian metrics as follows.
\begin{lem}\label{flat:riemm}
Let $F=\sqrt{g_{ij}(x)y^iy^j}$ be a positive definite Finsler metric on a domain $\mathcal{U}\subset\mathbb{R}^n$ that is reducible to a Riemannian metric. Then $F$ is projectively flat  if and only if the following relation is satisfied 
\begin{equation}\label{levi-civita}
g_{ij,l}=2\psi_lg_{ij}+\psi_ig_{jl}+\psi_jg_{il},\ P(x,y)=\psi_l(x)y^l.
\end{equation}
In this case, $P$ is the projective factor of $F$.
\end{lem}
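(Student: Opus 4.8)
The plan is to use Hamel's characterization \eqref{proj:flat}: $F$ is projectively flat on $\mathcal{U}$ if and only if $\delta_{S_0}F=0$, in which case the projective factor is $P=S_0F/(2F)$ by \eqref{H2}. Equivalently, $F$ is projectively flat if and only if its geodesic spray $S$ has the form $S=S_0-2P\mathcal{C}$. For a metric reducible to a Riemannian one, $F=\sqrt{g_{ij}(x)y^iy^j}$, the geodesic spray has the classical form $S=y^i\partial/\partial x^i-2G^i\partial/\partial y^i$ with $G^i=\tfrac12\gamma^i_{jk}(x)y^jy^k$, where $\gamma^i_{jk}=\tfrac12 g^{im}(g_{mj,k}+g_{mk,j}-g_{jk,m})$ are the Christoffel symbols and $g_{ij,k}:=\partial g_{ij}/\partial x^k$. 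So projective flatness of $F$ is exactly the condition $\tfrac12\gamma^i_{jk}y^jy^k=Py^i$ for all $y$; one can also see this directly by unwinding $\delta_{S_0}F=0$ in coordinates with $\partial F/\partial y^i=g_{ij}y^j/F$ and using the identity $(g_{ij,k}+g_{ik,j}-g_{jk,i})y^jy^k=2\gamma_{i,jk}y^jy^k$ after clearing denominators.

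The key observation — and the only nontrivial point — is that this condition forces $P$ to be linear in $y$. Indeed, applying $\sum_i\partial/\partial y^i$ to $G^i=Py^i$ and using Euler's relation $y^i\,\partial P/\partial y^i=P$ (as $P$ is $1$-homogeneous) gives $P=\tfrac{1}{n+1}\,\partial G^i/\partial y^i$; since each $G^i$ is a homogeneous quadratic polynomial in $y$, the right-hand side is a homogeneous linear polynomial, so $P(x,y)=\psi_l(x)y^l$ for suitable functions $\psi_l$ on $\mathcal{U}$. Substituting back, $\tfrac12\gamma^i_{jk}y^jy^k=\psi_l y^l y^i$ holds identically in $y$, and comparing symmetric quadratic forms yields $\gamma^i_{jk}=\psi_j\delta^i_k+\psi_k\delta^i_j$.

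It then remains to translate this into a statement about $g_{ij,l}$. Lowering the index, $\gamma_{i,jk}=g_{im}\gamma^m_{jk}=\psi_j g_{ik}+\psi_k g_{ij}$, and recovering the metric derivative through $g_{ij,k}=\gamma_{i,jk}+\gamma_{j,ik}$ gives exactly $g_{ij,l}=2\psi_l g_{ij}+\psi_i g_{jl}+\psi_j g_{il}$ with $P(x,y)=\psi_l(x)y^l$, which is \eqref{levi-civita}. For the converse I would run this computation backwards: assuming \eqref{levi-civita}, a short substitution gives $g_{ij,k}+g_{ik,j}-g_{jk,i}=2\psi_k g_{ij}+2\psi_j g_{ik}$, hence $\gamma^m_{jk}=\psi_j\delta^m_k+\psi_k\delta^m_j$ and $G^m=\tfrac12\gamma^m_{jk}y^jy^k=\psi_l y^l y^m$, so that $S=S_0-2(\psi_l y^l)\mathcal{C}$ and $F$ is projectively flat with projective factor $P=\psi_l y^l$; no linearity argument is needed here since $P$ is given in the required form. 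The main obstacle is really the linearity step for $P$ in the direct implication; once that is in place the rest is routine index bookkeeping, and the Levi-Civita form \cite{mikes} is precisely the shadow of $\gamma^i_{jk}=\psi_j\delta^i_k+\psi_k\delta^i_j$ at the level of the metric tensor.
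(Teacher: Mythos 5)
Your proof is correct, and it takes a genuinely different route from the one in the paper, although both ultimately rest on Hamel's characterization. The paper stays in the intrinsic formalism: it rewrites $\delta_{S_0}F=0$ as $\delta_{S_0}F^2=2P\,d_JF^2$, expresses this via the dynamical covariant derivative to get $2g_{ij,l}y^ly^j-g_{jl,i}y^jy^l=4Pg_{ij}y^j$, asserts from this that $P$ is linear, and then extracts the Levi--Civita equations by differentiating twice in $y$ and adding a cyclic permutation. You instead work directly with the spray coefficients $G^i=\tfrac12\gamma^i_{jk}(x)y^jy^k$ and the condition $G^i=Py^i$, obtain linearity of $P$ from the contraction $P=\tfrac{1}{n+1}\,\partial G^i/\partial y^i$ together with Euler's relation, and then polarize to get $\gamma^i_{jk}=\psi_j\delta^i_k+\psi_k\delta^i_j$, recovering \eqref{levi-civita} via $g_{ij,k}=\gamma_{i,jk}+\gamma_{j,ik}$. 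Your treatment of the linearity of $P$ is actually tighter than the paper's: the paper's equation $2g_{ij,l}y^ly^j-g_{jl,i}y^jy^l=4Pg_{ij}y^j$ only shows that $P\cdot g_{ij}y^j$ is a quadratic polynomial for each $i$, and concluding that $P$ itself is a linear polynomial requires an extra (unstated) divisibility remark, whereas your trace identity gives it in one line. What the paper's approach buys in exchange is that it never needs the explicit Christoffel symbols and its converse direction stays coordinate-free almost to the end (producing $d_{h_0}F^2=2F^2d_JP+Pd_JF^2$ and hence $P=S_0F/(2F)$ directly); your converse, by running the index computation backwards, is equally valid and also identifies the projective factor as $P=\psi_ly^l$ immediately. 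Both arguments are complete; yours is the more elementary and self-contained of the two.
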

\begin{proof}
The only thing we have to prove is the equivalence between Hamel equation \eqref{proj:flat} and Levi Civita equation \eqref{levi-civita}. For the direct implication we will assume that $F$ is projectively flat and hence $\delta_{S_0}F=0.$ According to Proposition 2.2 from \cite{bc} this is equivalent with
\begin{equation}\label{cond2}
\delta_{S_0}F^2=2Pd_JF^2.
\end{equation} 
The previous relation can be written in terms of the dynamical covariant derivative $\nabla_0$ in the following form
\begin{equation}\label{cov:der}
\nabla_0\left(d_JF^2\right)-d_{h_0}F^2=2Pd_JF^2.
\end{equation}
In local coordinates \eqref{cov:der} takes the form 
\begin{equation}\label{nabla}
\left(\nabla_0\left(2g_{ij}y^j\right)-\frac{\partial F^2}{\partial x^i}\right) dx^i=4Pg_{ij}y^jdx^i.
\end{equation}
Since $\nabla_0=\mathcal{D}_{S_0}$, where $\mathcal{D}_{S_0}$ is the Berwald connection of the flat spray $S_0$, from \eqref{nabla} we obtain
\begin{equation}\label{nabla1}
\mathcal{D}_{y^l\frac{\partial}{\partial x^l}}(2g_{ij}y^j)-g_{jl,i}y^jy^l=4Pg_{ij}y^j\Leftrightarrow 2g_{ij,l}y^ly^j-g_{jl,i}y^jy^l=4Pg_{ij}y^j.
\end{equation}
The previous relation leads us to the conclusion that for a Finsler function that is reducible to a Riemannian metric the projective factor $P$ is linear in fiber coordinates. We assume that $P(x,y)=\psi_i(x)y^i$. 
We want to obtain more information about the metric $g_{ij}$, which is why we take the derivative successively with respect to the fiber coordinate. The derivative with respect to $y^l$ leads us to
\begin{equation}\label{d:yk}
2g_{ij,l}y^j+2g_{il,j}y^j-2g_{jl,i}y^j=4\frac{\partial P}{\partial y^l}g_{ij}y^j+4Pg_{il}.
\end{equation} 
Further, the derivative with respect to $y^j$ gives us
\begin{equation}\label{d:yj}
2g_{ij,l}+2g_{il,j}-2g_{jl,i}=4\frac{\partial P}{\partial y^l}g_{ij}+4\frac{\partial P}{\partial y^j}g_{il}.
\end{equation}
We take a cyclic permutation on \eqref{d:yj}
\begin{equation}\label{permutation}
2g_{jl,i}+2g_{ij,l}-2g_{li,j}=4\frac{\partial P}{\partial y^i}g_{jl}+4\frac{\partial P}{\partial y^l}g_{ji}.
\end{equation}
Finally, by gathering \eqref{d:yj} and \eqref{permutation} we obtain
\begin{equation}
g_{ij,l}=2\frac{\partial P}{\partial y^l}g_{ij}+\frac{\partial P}{\partial y^j}g_{il}+\frac{\partial P}{\partial y^i}g_{jl},
\end{equation}
which are exactly the equations  \eqref{levi-civita} we wanted to reach.\\
For the converse, we start from \eqref{levi-civita}, we  multiply with $y^iy^j$ and sum over $i$ and $j$.
\begin{equation}\label{lch}
\frac{\partial g_{ij}y^iy^j}{\partial x^l}=2g_{ij}y^iy^j\psi_l+\psi_jy^jg_{il}y^i+\psi_iy^ig_{jl}y^j.
\end{equation}
From \eqref{lch} we get
\begin{equation}\label{lch1}
d_{h_0}F^2=2F^2d_JP+Pd_JF^2,
\end{equation}
which implies
\begin{equation}\label{lch2}
S_0(F^2)=4F^2P.
\end{equation}
Therefore $P=\dfrac{S_0F}{2F}.$
From \eqref{lch1} if follows $2d_{h_0}F=d_JS_0F$, which is equivalent to $\delta_{S_0}F=0$ and hence the proof is complete.
\end{proof}
At the moment we have all the tools needed to prove the direct implication of the classical statement of Beltrami theorem. 
\begin{thm}\label{Beltrami}
Let $F=\sqrt{g_{ij}(x)y^iy^j}$ be a projectively flat Finsler metric that is reducible to a Riemannian metric. Then $F$ is of constant flag curvature.
\end{thm}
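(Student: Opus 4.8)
The plan is to exhibit $F$ as a projective deformation of the Euclidean metric $F_0=\sqrt{\delta_{ij}y^iy^j}$ on $\mathcal{U}$ and to conclude with Proposition \ref{rem:curvature}. Since $F$ is projectively flat, its geodesic spray is $S=S_0-2P\mathcal{C}$ with $S_0$ the flat spray, so $F$ and $F_0$ are projectively related: the initial metric $F_0$ has constant flag curvature ($\kappa=0$), and the projective factor of the deformation is $P$. By Lemma \ref{flat:riemm} the metric coefficients satisfy the Levi--Civita equations \eqref{levi-civita} and $P(x,y)=\psi_l(x)y^l$ is linear in the fibre coordinates. In view of Proposition \ref{rem:curvature} it therefore suffices to prove that $P$ is a Hamel function of $S_0$, i.e. $\delta_{S_0}P=0$; because $P$ is linear in $y$ this amounts, exactly as in the proof of Proposition \ref{prflatR}, to showing that the $1$-form $\psi_l\,dx^l$ is closed.

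To obtain the closedness of $\psi_l\,dx^l$ I would contract the Levi--Civita equations with the inverse metric $g^{ij}$. Using the identity $g^{ij}g_{ij,l}=\partial_l\log\det(g_{mn})$ this gives
\begin{equation*}
\partial_l\log\det(g_{mn})=g^{ij}g_{ij,l}=2(n+1)\,\psi_l,
\end{equation*}
so $\psi_l=\partial_l\sigma$ with $\sigma=\tfrac{1}{2(n+1)}\log\det(g_{mn})$; in particular $\partial_i\psi_l=\partial_l\psi_i$. Hence $\psi_l\,dx^l$ is closed, $\delta_{S_0}P=y^l(\partial_l\psi_i-\partial_i\psi_l)\,dx^i=0$, and $P$ is a Hamel function of $S_0$. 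Proposition \ref{rem:curvature} then gives that $F$ has constant flag curvature, which is the assertion (note that the value of the constant need not be $0$, since the Hamel deformation changes it).

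The only genuine obstacle is the closedness of $\psi_l\,dx^l$; everything else is a formal consequence of Lemma \ref{flat:riemm} and Proposition \ref{rem:curvature}, and the contraction with $g^{ij}$ keeps this step short (the alternative, differentiating \eqref{levi-civita} and antisymmetrising, is longer and more error-prone). For $\dim M\geq 3$ one may instead avoid Proposition \ref{rem:curvature}: taking $S_0$ as the initial spray in Lemma \ref{lem:pw1}, its tensor $W_1$ vanishes (both its curvature tensor and its Jacobi endomorphism are zero), so the tensor $W_1$ of $S$ equals $\tfrac{1}{2}\delta_{S_0}P\wedge J+d_Jd_{h_0}P\otimes\mathcal{C}$, and both summands vanish once $\psi$ is closed; Theorem \ref{thm:cfc1} then finishes the argument, while the two-dimensional case is treated exactly as in the proof of Proposition \ref{rem:curvature}.
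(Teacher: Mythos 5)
Your proof is correct and follows the same route as the paper: exhibit $F$ as a projective deformation of the Euclidean metric (of zero constant flag curvature) and invoke Proposition \ref{rem:curvature}. The paper merely asserts that in the Riemannian case the projective factor is a Hamel function ``due to its linearity'' (deferring to \cite{bc}), whereas you supply the genuinely needed closedness of $\psi_l\,dx^l$ via the contraction $g^{ij}g_{ij,l}=2(n+1)\psi_l=\partial_l\log\det(g_{mn})$ --- a correct and worthwhile addition, since linearity alone does not yield $\delta_{S_0}P=0$.
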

\begin{proof}
Since $F$ is a projectively flat metric it follows that is projectively related to the Euclidean metric.
The Euclidian metric is a metric of zero constant flag curvature. Moreover, in the Riemannian case the projective factor is a Hamel function. Hence, according to Proposition \ref{rem:curvature} it follows that $F$ is of constant flag curvature.
\end{proof}
We recall that the family of projectively flat Finsler metrics that are reducible to a Riemannian metric  is given by, \cite{mikes}:
\begin{equation}\label{g}
F=\frac{\sqrt{|y|^2+\mu\left(|x|^2|y|^2-\langle x,y\rangle^2\right)}}{1+\mu|x|^2}.
\end{equation}
In order to simplify our search we treat further only projectively flat Randers metrics whose projective factor is proportional to the metric. Using the family \eqref{g} and the characterization for projectively flat Randers metrics we can formulate the following result
\begin{lem}\label{randers}
The family of projectively flat Randers metrics of negative constant flag curvature whose projective factor is proportional to the metric is given by
\begin{equation}\label{Finsler:m} F=a+b
\text{ where } a=\frac{\sqrt{|y|^2-4c^2\left(|x|^2|y|^2-\langle x,y\rangle^2\right)}}{1-4c^2|x|^2}\text{ and } b=\frac{2c\langle x,y\rangle}{1-4c^2|x|^2}.
\end{equation}
In this case, the constant $c$ represents the coefficient of proportionality between the projective factor and the metric.
\end{lem}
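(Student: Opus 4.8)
The plan is to set $F=a+b$, use projective flatness to pin down the possible shapes of $a$ and $b$, compute the projective factor of $F$ by Hamel's formula \eqref{H2}, and then translate ``the projective factor is a constant multiple of $F$'' into equations on $\mu$ and $b$.

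First I would apply Proposition \ref{prflatR}: projective flatness of $F=a+b$ forces $a$ to be a projectively flat Riemannian metric and $b_idx^i$ to be closed. By Lemma \ref{flat:riemm} and the list \eqref{g}, the former means $a=\dfrac{\sqrt{|y|^2+\mu(|x|^2|y|^2-\langle x,y\rangle^2)}}{1+\mu|x|^2}$ for some constant $\mu$. A short computation gives $S_0\big(|y|^2+\mu(|x|^2|y|^2-\langle x,y\rangle^2)\big)=0$, so the projective factor of $a$ itself is $P_a=\dfrac{S_0a}{2a}=\dfrac{-\mu\langle x,y\rangle}{1+\mu|x|^2}$. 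Since by \eqref{H2} the projective factor of $F$ is $P_F=\dfrac{S_0F}{2F}$, the requirement $P_F=cF$ (with $c$ the proportionality constant) becomes $S_0a+S_0b=2c\,(a^2+2ab+b^2)$.

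Next I would separate this identity into its irrational and rational parts. As $a$ is the square root of a nondegenerate quadratic form in $y$ with coefficients depending on $x$, it is not a rational function of $(x,y)$, whereas $b$, $a^2$, $b^2$, $S_0b$ all are; hence the terms carrying an odd power of $a$ and the purely rational terms must vanish separately. This produces $S_0a=4cab$, equivalently $P_a=2cb$, together with $S_0b=2c(a^2+b^2)$. The first equation forces $b=\dfrac{P_a}{2c}=\dfrac{-\mu\langle x,y\rangle}{2c(1+\mu|x|^2)}$, and this $1$-form is automatically closed, being proportional to $d\log(1+\mu|x|^2)$. Substituting this $b$ into the second equation and writing $N=|y|^2+\mu(|x|^2|y|^2-\langle x,y\rangle^2)$, the left side should reduce to $\dfrac{-\mu\,(N-\mu\langle x,y\rangle^2)}{2c(1+\mu|x|^2)^2}$ and the right side to $\dfrac{4c^2N+\mu^2\langle x,y\rangle^2}{2c(1+\mu|x|^2)^2}$; the $\langle x,y\rangle^2$ contributions cancel and one is left with $-\mu N=4c^2N$, i.e.\ $\mu=-4c^2$. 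Putting $\mu=-4c^2$ back into $a$ and $b$ gives exactly the metric \eqref{Finsler:m}, and on the ball $|x|<1/(2|c|)$ one has $a^2-b^2=|y|^2/(1-4c^2|x|^2)>0$, so $F$ is a genuine Randers metric there.

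Finally, for the claim about the flag curvature: since $F$ is projectively flat the Hamel equation $\delta_{S_0}F=0$ holds, hence $P_F=cF$ is a Hamel function for the flat spray $S_0$; as the Euclidean metric has (zero) constant flag curvature, Proposition \ref{rem:curvature} applied to the projectively related pair $S_0$ and $S_0-2P_F\mathcal{C}$ shows that $F$ has constant flag curvature. To identify the constant I would feed $R=0$ and $P=cF$ into the projective change formula \eqref{projr} for the curvature tensor; using $S_0F=2P_FF=2cF^2$ and the identity $d_{h_0}F=2cFd_JF$ (obtained by differentiating $S_0F=2cF^2$ once and using $\delta_{S_0}F=0$), the right-hand side collapses to $-c^2\,Fd_JF\wedge J$, so by \eqref{CFC} the flag curvature is the negative constant $-c^2$. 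The single step that demands genuine care is the rational/irrational separation; the remaining manipulations are direct, if somewhat lengthy, computations.
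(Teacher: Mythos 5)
Your proposal is correct and follows essentially the same route as the paper: pin down $a$ via Proposition \ref{prflatR} and the list \eqref{g}, impose $S_0a+S_0b=2c(a+b)^2$, split off the terms containing $a$ to get $b=-\tfrac{\mu\langle x,y\rangle}{2c(1+\mu|x|^2)}$ and $S_0b=2c(a^2+b^2)$, and deduce $\mu=-4c^2$. Your justification of the splitting (rational versus irrational dependence on $y$, rather than the paper's looser ``quadratic versus non-quadratic'' phrasing) and your closing checks that $b_i\,dx^i$ is closed, that $a^2-b^2>0$, and that $\kappa=-c^2$ are welcome refinements but do not change the argument.
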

\begin{proof}
The family of all projectively flat Finsler metrics that are reducible to Riemannian metrics is given by \eqref{g}. In what follows we will see what conditions must be added for $b$ in order to obtain the desired result.
Hence, from Proposition \ref{prflatR} we have that \[a=\frac{\sqrt{|y|^2+\mu\left(|x|^2|y|^2-\langle x,y\rangle^2\right)}}{1+\mu|x|^2}.\]
Let us compute the projective factor for the Randers metric $F=a+b$ with $a$ written above.
\begin{equation}\label{p1}
\begin{aligned}
P_0=\frac{S_0F}{2F}=\frac{S_0(a+b)}{2(a+b)}=\frac{1}{2(a+b)}\left(-\frac{2\mu\langle x,y\rangle a}{1+\mu|x|^2}+S_0b\right).
\end{aligned}
\end{equation}
We want to find $b$ such that $P_0=cF.$ From \eqref{p1} it follows
\begin{equation}\label{p1:a}
-\frac{2\mu\langle x,y\rangle a}{1+\mu|x|^2}+S_0b=2c(a^2+2a b+b^2).
\end{equation}
Therefore, by separating the quadratic part from the non-quadratic part in \eqref{p1:a} we get
\begin{equation}\label{BETA}
b=-\frac{\mu\langle x, y\rangle}{2c(1+\mu|x|^2)}  \text{ and } S_0b=2c(a^2+b^2).
\end{equation}
Let us see what happens after replacing $b$ given above in the second relation from \eqref{BETA} 
\begin{equation}
\begin{aligned}
-\frac{\mu}{2c}a^2+2cb^2=2c(a^2+b^2).
\end{aligned}
\end{equation}
Hence, the projective factor is proportional to the metric if and only if the Riemannian metric is of negative constant curvature
\begin{equation}
\mu=-4c^2.
\end{equation}
The previous relation give us the information that we have to work only with projectively flat Riemanian metrics of negative curvature. Hence we choose $a$ of the following form
\begin{equation}\label{alpha}
a=\frac{\sqrt{|y|^2-4c^2\left(|x|^2|y|^2-\langle x,y\rangle^2\right)}}{1-4c^2|x|^2}.
\end{equation} and 
\begin{equation}
b=\frac{2c\langle x,y\rangle}{1-4c^2|x|^2}.
\end{equation}
Therefore the family we were looking for is the one  written in \eqref{Finsler:m}.
\end{proof}
Moreover, we can compute the flag curvature for the family of metrics written above as follows:
\begin{equation}
\begin{aligned}
\kappa F^2&=P_0^2-S_0P_0=c^2F^2-2c^2F^2=-c^2F^2\Rightarrow \kappa=-c^2.
\end{aligned}
\end{equation}
\section{Projectively related Finsler metrics that transform the projective Weyl-type curvature tensor $W_1$ into a invariant tensor }

All the constructions from this section are based on the family of metrics introduced  in Lemma \ref{randers}. Hence we make different projective deformation of a projectively flat Randers metric whose projective factor is proportional to the metric.
\subsection{A construction for a new class of projectively related Finsler metrics with the same negative flag curvature} We start with $F$ the projectively flat Randers metric of negative constant flag curvature given by \eqref{Finsler:m}. We make a Randers deformation of the metric \begin{eqnarray}F\rightarrow \overline{F}=F+\overline{b},\end{eqnarray} where $\overline{b}$ is given by $\overline{b}(x,y)=b_i(x)y^i.$  
We have to study first if $F$ and $\overline{F}$ defined above are projectively related.
In other words we have that
\begin{equation}
\delta_S\overline{F}=0\Leftrightarrow \delta_S\overline{b}=0.
\end{equation}
Let us compute the projective factor associated to the deformation mentioned above.
 \begin{equation}\label{projective:factor}
P=\frac{S(\overline{F})}{2\overline{F}}=\frac{S(F+\overline{b})}{2(F+\overline{b})}=\frac{S(\overline{b})}{2(F+\overline{b})}.
\end{equation}
Since the geodesic spray associated to $F$ is $S=S_0-2cF\mathcal{C}$, \eqref{projective:factor} becomes
\begin{equation}\label{pr:factor}
P=\dfrac{S_0\overline{b}-2cF\overline{b}}{2(F+\overline{b)}}.
\end{equation}
We assume that the projective factor is proportional with  $\overline{b}=\psi_iy^i$. With this assumption it follows that $\delta_SP=0$ and hence the projective Weyl-type tensor $W_1$ is invariant. 
Hence, let us write the assumption regarding the proportionality 
\begin{equation}\label{prop}
P=\nu\overline{b},\ \nu\in\mathbb{R}.
\end{equation}
From \eqref{pr:factor} and \eqref{prop} we get
\begin{equation}\label{beta:eq}
S_0\overline{b}-2cF\overline{b}=2\nu F\overline{b}+2\nu\overline{b}^2.
\end{equation}
The previous relation leads us to the conclusion that the quantity $\nu+c$ vanishes. Everything else is either quadratic or linear in the fiber coordinate, while $F$ cannot be linear.\\
Further we must find $\overline{b}$ that satisfies the assumption mentioned above.  The relation that provides us information regarding  $\overline{b}$ is the quadratic part from \eqref{beta:eq}, meaning:
\begin{equation}
S_0\overline{b}-2\nu\overline{b}^2=0.
\end{equation}
In local coordinates the previous relation has the following form
\begin{equation}\label{eq-beta}
y^iy^j\frac{\partial \psi_i}{\partial x^j}-2\nu y^iy^j\psi_i\psi_j=0.
\end{equation}
The condition of projectively related metrics ensures that $\psi_i$ is a gradient,\ $\psi_i=\dfrac{\partial \psi}{\partial x^i}$, hence \eqref{eq-beta} becomes
\begin{equation}\label{eq:1b}
\frac{\partial^2 \psi}{\partial x^i\partial x^j}-2\nu\frac{\partial \psi}{\partial x^i}\frac{\partial \psi}{\partial x^j}=0.
\end{equation}
Relation \eqref{eq:1b} can be written in a different form
\begin{equation}\label{eq:2b}
\frac{\partial}{\partial x^j}\left(e^{-2\nu \psi}\frac{\partial \psi}{\partial x^i}\right)=0.
\end{equation}
After integrating \eqref{eq:2b} we get
\begin{equation}\label{eq:3b}
e^{-2\nu b}\frac{\partial b}{\partial x^i}=e_i, \text{ where $e_i$ are constants on } \mathcal{U}\subset\mathbb{R}^n.
\end{equation}
From \eqref{eq:3b} it follows
\begin{equation}
-\frac{1}{2\nu}\frac{\partial }{\partial x^i}\left(e^{-2\nu \psi}\right)=e_i\Rightarrow \frac{\partial }{\partial x^i}\left(e^{-2\nu \psi}\right)=-2\nu e_i.
\end{equation}
Then 
\begin{equation}
e^{-2\nu \psi}=-2\nu \left(\langle e,x\rangle+f\right),
\end{equation}
and hence
\begin{equation}
\psi=-\frac{1}{2\nu }\ln\left(-2\nu \left(\langle e,x\rangle+f\right)\right).
\end{equation}
Finally we got that the 1-form $\overline{b}$ is given by
\begin{equation}\label{beta}
\overline{b}(x,y)=y^i\frac{\partial \psi}{\partial x^i}=\frac{\langle e,y\rangle}{4\nu^2\left(\langle e,x\rangle+f\right)},
\end{equation}
\text{ where $e_i$ and $f$ are constants such that $F+\overline{b}$ is positive}.
The form of the family of metrics constructed through this Randers deformation is
\begin{equation}\label{samecurvature}
\overline{F}=\frac{\sqrt{|y|^2-4\nu^2\left(|x|^2|y|^2-\langle x,y\rangle^2\right)}}{1-4\nu^2|x|^2}-\frac{2\nu\langle x,y\rangle}{1-4\nu^2|x|^2}+\frac{\langle e,y\rangle}{4\nu^2\left(\langle e,x\rangle+f\right)}.
\end{equation}
We can recover the flag curvature for the new metric as follows
\begin{equation}
\begin{aligned}
\overline{\kappa}\overline{F}^2=\kappa F^2+P^2-SP=-c^2F^2+\nu^2\overline{b}^2-\nu S(\overline{b})&=-\nu^2F^2+\nu^2\overline{b}^2-2\nu^2\overline{b}(F+\overline{b})\\&=-\nu^2\overline{F}\Rightarrow \overline{\kappa}=-\nu^2
\end{aligned}
\end{equation}
In conclusion we can say that  by a Randers change $\overline{F}=F+\overline{b}$, of a projectively flat Randers metric $F$ of constant flag curvature, whose projective factor is proportional to the metric, we can obtain a new family of projectively flat Finsler metrics with the same constant flag curvature as $F$ if we consider $\overline{b}$ from  \eqref{beta}.
If we analyze the metric \eqref{samecurvature} separately we notice that it is a projectively flat Finsler metric whose projective factor is $F-\bar{b}$.
Moreover, if we fix $\nu=-\frac{1}{2}$ and $f=1$  we will obtain that $F$ is the Funk metric, 
\begin{equation}\label{funk}
F=\frac{\sqrt{|y|^2-(|x|^2|y|^2-\langle x, y\rangle^2)}}{1-|x|^2}+\frac{\langle x, y \rangle}{1-|x|^2},\ y\in T_xB^n,
\end{equation} 
and $\overline{F}$ is the generalized Funk metric, \cite{ZSh},
\begin{equation}\label{genfunk}
F=\frac{\sqrt{|y|^2-(|x|^2|y|^2-\langle x, y\rangle^2)}}{1-|x|^2}+\frac{\langle x, y \rangle}{1-|x|^2}+\frac{\langle e,y\rangle}{1+\langle e,x\rangle},\ \text{ with  $e$ constant vector }, e\in\mathbb{R}^n,\ |e|<1.
\end{equation} 
\subsection{New Finsler metrics of zero flag curvature}\label{zero:curv}
\subsubsection{A new class of projectively flat Finsler metrics of zero flag curvature}
 Starting from the projectively flat Finsler metric \eqref{Finsler:m} we construct a family of  Finsler metric of zero flag curvature having as starting point the definition of the family of square Finsler metrics, \cite{SH}.\\We consider the following deformation of the Finsler metric \eqref{Finsler:m}: \begin{equation}\label{w:f}
\overline{F}=f(x)\frac{F^2}{a}, \text{ where f is a positive function and } a \text{ is given by \eqref{alpha}}.
\end{equation}
The first step is to see under what condition $F$ and $\overline{F}$ are projectively related. Therefore, we formulate the following result
\begin{lem}\label{P:Randers}
Let $F=a+b$ a Randers metric. Then $\overline{F}=f(x)\dfrac{F^2}{a}$ is projectively related to $F$ if and only if the following relation is satisfied
\begin{equation}\label{Proj:related}
\frac{F^2}{a}d_hf-S(f)d_J\left(\frac{F^2}{a}\right)+fS(a)d_J\left(\frac{F^2}{a^2}\right)=0.
\end{equation}
\end{lem}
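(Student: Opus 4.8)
The plan is to turn ``projectively related'' into a Hamel-type equation for $\overline F$ relative to the geodesic spray $S$ of $F$, and then to reduce that equation to \eqref{Proj:related} by a short computation with the Euler--Lagrange $1$-form \eqref{EL}. By Hamel's characterization \eqref{proj:flat}--\eqref{H2}, now applied with the geodesic spray $S$ of $F$ in the role of the reference spray (together with \cite[Prop.~2.2]{bc}), $\overline F$ is projectively related to $F$ if and only if its geodesic spray has the form $S-2P\mathcal{C}$ for some $1$-homogeneous $P$, and this happens precisely when $\delta_S\overline F=0$ (with $P=S(\overline F)/(2\overline F)$). So it suffices to prove $\delta_S\overline F=0\iff\eqref{Proj:related}$.

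First I would record the identities forced on $S$ by metrizing $F$: applying $i_S$ to \eqref{metrizable} gives $S(F^2)=0$, hence $S(F)=0$, while \eqref{metrizable} itself reads $\delta_SF^2=0$, i.e. $\delta_SF=0$. Since $F=a+b$ is a (projectively flat) Randers metric, the $1$-form $b_i(x)y^i$ is closed by Proposition \ref{prflatR}; being linear in $y$ it then satisfies $\delta_Sb=0$, so that $\delta_Sa=\delta_SF-\delta_Sb=0$ as well. Finally, since $f=f(x)$ depends only on the base point, $d_Jf=0$, and \eqref{EL} gives $\delta_Sf=-d_hf$.

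Now the computation. From $\delta_SL=\mathcal{L}_Sd_JL-dL$ one gets the (non-derivation) product rule $\delta_S(uv)=u\,\delta_Sv+v\,\delta_Su+(Su)\,d_Jv+(Sv)\,d_Ju$. Applying it to $F^2=a\cdot(F^2/a)$ and using $\delta_SF^2=0$, $S(F^2)=0$, $\delta_Sa=0$, $S(F^2/a)=-(F^2/a^2)S(a)$, together with the elementary vertical identity $d_J(F^2/a)-(F^2/a^2)d_Ja=a\,d_J(F^2/a^2)$, yields the key intermediate relation
\begin{equation*}
\delta_S\!\left(\frac{F^2}{a}\right)=-\,S(a)\,d_J\!\left(\frac{F^2}{a^2}\right).
\end{equation*}
Expanding $\overline F=f\cdot(F^2/a)$ by the same product rule, discarding the term with $d_Jf$, using $\delta_Sf=-d_hf$, and substituting the relation above, one obtains
\begin{equation*}
\delta_S\overline F=-\frac{F^2}{a}\,d_hf+S(f)\,d_J\!\left(\frac{F^2}{a}\right)-f\,S(a)\,d_J\!\left(\frac{F^2}{a^2}\right),
\end{equation*}
so $\delta_S\overline F=0$ is exactly \eqref{Proj:related}; the converse is the same chain of equivalences read backwards.

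The difficulty is organizational rather than conceptual: one must keep track of the non-derivation correction terms $(Su)d_Jv+(Sv)d_Ju$ hidden in $\delta_S$ and invoke $S(F)=0$, $\delta_SF=0$ and $\delta_Sa=0$ at exactly the right places so that the many terms collapse into the compact form \eqref{Proj:related}. It is precisely the hypothesis that $F$ is a (projectively flat) Randers metric that supplies $\delta_Sa=0$; without it an extra summand $-(fF^2/a^2)\,\delta_Sa$ would survive in $\delta_S\overline F$.
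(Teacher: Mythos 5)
Your proposal is correct and follows essentially the same strategy as the paper: both reduce ``projectively related'' to the Hamel condition $\delta_S\overline F=0$ and then expand by Leibniz rules using $d_Jf=0$ and the identities forced by the metrizability of $F$ (the paper expands the $2$-form $d_Jd_h\overline F$ and contracts with $S$ at the end, while you work with the semi-basic $1$-form $\delta_S$ and its product rule throughout, which is only an organizational difference). Your closing remark that the hypothesis really needed is $\delta_Sa=0$, i.e.\ that the $1$-form $b$ is closed, is accurate and matches what the paper's own proof silently uses via $d_hd_Ja=0$.
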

\begin{proof}
From \cite{bc} we have that $F$ and $\overline{F}$ are projectively related if and only if $\delta _S\overline{F}=0$. Hence, we must rewrite the condition $\delta_S\overline{F}=0$, or equivalently $d_Jd_h\overline{F}=0$ for the function $\overline{F}$ \eqref{w:f}.\\
Since $S$ is the geodesic spray that metricizes the Finsler function $F$ it follows that $d_hF=0.$ Moreover, since $F=a+b$, with $b$ linear it follows that $d_hd_Ja=0.$
The assumption regarding the function $f$ presented at the beginning of this subsection can be written as
\begin{equation}\label{finf}
d_Jf=0.
\end{equation}
Using \eqref{finf} it follows that 
\begin{equation}
d_Jd_h\overline{F}=0\Leftrightarrow d_J\left(\frac{F^2}{a}\right)\wedge d_hf-fd_J\left(\frac{F^2}{a^2}\right)\wedge d_ha=0.
\end{equation}
Moreover, the previous relation leads to
\begin{equation}
i_S\left(d_J\left(\frac{F^2}{a}\right)\wedge d_hf-fd_J\left(\frac{F^2}{a^2}\right)\wedge d_ha\right)=0,
\end{equation}
which due to the fact that $F$ and $a$ are $1$-homogeneous becomes \eqref{Proj:related}.
\end{proof}
It is difficult to find the function $f$ directly from the condition written in Lemma \ref{P:Randers}. This is why we assume that $F$ and $\overline{F}$ are projectively related and that the projective factor is a Hammel function, meaning $\delta_SP=0.$ Using these assumptions we try to find the function $f$ that satisfies the conditions mentioned in Lemma \ref{P:Randers}.
\begin{equation}\label{P:factor}
\begin{aligned}
\displaystyle P&=\frac{S\overline{F}}{2\overline{F}}=\dfrac{S(f)\dfrac{F^2}{a}-\dfrac{fF^2S(a)}{a^2}}{\dfrac{2fF^2}{a}}=\dfrac{S(f)\cdot\dfrac{F^2}{a}-\dfrac{fF^2}{a^2}\cdot\left(S_0a-2cFa\right)}{\dfrac{2fF^2}{a}}\\
&=\dfrac{S(f)}{2f}-\dfrac{S_0a-2cFa}{2a}=\dfrac{Sf}{2f}-\frac{4ca b-2cFa}{2a}=\dfrac{S_0f}{2f}-2cb+cF.
\end{aligned}
\end{equation} 
We can notice that \begin{equation}
\delta_SP=\delta_S\left(\frac{S_0f}{2f}-2cb+cF\right)=\delta_S\left(\frac{S_0f}{2f}\right)
\end{equation}
We recall that we want to find $f$ such that $\delta_SP=0$. Since $d_hf=df$  we can assume that \[S_0f=4cfb.\]
Based on the previous assumption we go back to formula \eqref{Proj:related} to see if $F$ and $\overline{F}$ are projectively related. Hence we have to see if the following relation is true
\begin{equation}\label{verific}
\frac{F^2}{a}d_hf-f\left(\frac{4cb d_JF^2}{a}-\frac{4cb F^2 d_Ja}{a^2}-\frac{S(a)d_JF^2}{a^2}+\frac{S(a)F^2d_Ja^2}{a^4}\right)=0.
\end{equation}
After replacing $S(a)=2ca(2b-F)$ in \eqref{verific} we get
\begin{equation}
\begin{aligned}
&\frac{F^2}{a}d_hf-f\left(\frac{4cb d_JF^2}{a}-\frac{4cb F^2 d_Ja}{a^2}-\frac{(2ca(2b-F))d_JF^2}{a^2}+\frac{(2ca(2b-F))F^2d_Ja^2}{a^4}\right)\\=&
\frac{F^2}{a}d_hf-f\left(\frac{4cb d_JF^2}{a}-\frac{4cb F^2 d_Ja}{a^2}-\frac{(2c(2b-F))d_JF^2}{a}+\frac{(4c(2b-F))F^2d_Ja}{a^2}\right)\\=&
\frac{F^2}{a}d_hf-f\left(\frac{4cF^2d_JF}{a}+\frac{4cb F^2d_Ja}{a^2}-\frac{4cF^3d_Ja}{a^2}\right).
\end{aligned}
\end{equation}
Therefore, in this case $F$ and $\overline{F}$ are projectively related if and only if
\begin{equation}
d_hf-4cf\left(d_JF+\frac{b d_Ja-Fd_Ja}{a}\right)=0.
\end{equation}
The assumptions made for $f$ ensures that
\begin{equation}
d_JS_0f-4cfd_Jb=0,
\end{equation}
which leads to the conclusion that the two metrics are projectively related.\\
Moreover we can compute the curvature for the new metric
\begin{equation}
\begin{aligned}
\overline{\kappa}\overline{F}^2&=\kappa F^2+P^2-SP\\&=-c^2F^2+c^2F^2=0\Rightarrow \overline{\kappa}=0.
\end{aligned}
\end{equation}
The last aspect we want to discuss here is the form of the function $f$. We  show that we can find $f$ that satisfies $S_0f=4cfb$, for $b$ given by \eqref{BETA}. We have, \[\frac{\partial f}{\partial x^i}=\frac{8c^2fx_i}{1-4c|x|^2}.\]
Therefore
\[\frac{\partial }{\partial x^i}\ln f=\frac{\partial}{\partial x^i}\ln\left(\frac{1}{1-4c^2|x|^2}\right)\Rightarrow f=\frac{\eta}{1-4c^2|x|^2}, \eta\in\mathbb{R^+}.\]
We can write now the form for the new metric
\begin{equation}\label{Berwald}
\overline{F}=\frac{\eta(\sqrt{|y|^2-4c^2\left(|x|^2|y|^2-\langle x,y\rangle^2\right)}+2c\langle x,y\rangle)^2}{(1-4c^2|x|^2)^2\sqrt{|y|^2-4c^2\left(|x|^2|y|^2-\langle x,y\rangle^2\right)}},\ \eta\in\mathbb{R^+}.
\end{equation}
The metric \eqref{Berwald} is a projectively flat Finsler metric whose projective factor is proportional to the projectively flat metric \eqref{Finsler:m}. Therefore the projective factor is a Hamel function and in view of Proposition \ref{rem:curvature} it follows that the metric \eqref{Berwald} is of constant flag curvature.\\
In this case, by fixing $c=\dfrac{1}{2}$ and $\eta=1$ we will recover the projective equivalence between the Funk metric \eqref{funk} and the Berwald metric, \cite{ZSh}:
\begin{equation}\label{Berw}
\overline{F}=\frac{\left(\sqrt{|y|^2-(|x|^2|y|^2-\langle x, y\rangle^2)}+\langle x, y \rangle\right)^2}{\left(1-|x|^2\right)^2\sqrt{|y|^2-(|x|^2|y|^2-\langle x, y\rangle^2)}}.
\end{equation}
\subsubsection{Another class of Finsler metrics of zero flag curvature obtained by conformal changes of Finsler spaces}
In this section we use a deformation of the family of  projectively flat Randers metrics \eqref{Berwald} to obtain a new class of Finsler metrics of zero flag curvature. Therefore, we will multiply the family of Finsler metrics of zero flag curvature with a $0$-homogeneous function in order to obtain a new class of projectively flat Finsler metrics of zero flag curvature.
We consider the following deformation of the projectively flat metric $\overline{F}$ given by \eqref{Berwald}:
\begin{equation}\label{conf}
\widetilde{F}=g\overline{F}+\dfrac{f}{F}\overline{F}, 
\end{equation} 
where $g$ is a function on $M$, $f(x,y)=f_i(x)y^i$ is a function in $TM$ such that $\widetilde{F}$ is positive and $F$ is the metric given in \eqref{Finsler:m}.
In this case we are interested under what conditions $\bar{F}$ is a projectively flat Finsler metric.
Hence we have the following result:
\begin{lem}\label{kzero}
We consider $F$ and $\overline{F}$ two projectively flat Finsler metrics and $\widetilde{F}=g\overline{F}+\dfrac{f}{F}\overline{F}$ a metric obtained by a multiplication of the projectively flat metric $\overline{F}$ with the $0$-homogeneous function $g+\dfrac{f}{F}$, where $g$ and $f$ are described in \eqref{conf} . Then $\widetilde{F}$ is projectively flat if and only if the following relation is satisfied
\begin{equation}\label{proj:flat:multiplication}
\begin{aligned}
&\overline{F}d_{h_0}g-S_0gd_J\overline{F}-\dfrac{S_0fd_J\overline{F}}{F}+\dfrac{\overline{F}S_0fd_JF}{F^2}-\dfrac{S_0\overline{F}d_Jf}{F}+\dfrac{fS_0\overline{F}d_JF}{F^2}+\dfrac{\overline{F}S_0Fd_Jf}{F^2}\\&+\dfrac{fS_0Fd_J\overline{F}}{F^2}-\dfrac{
2f\overline{F}S_0Fd_JF}{F^3}=0.
\end{aligned}
\end{equation} 
\end{lem}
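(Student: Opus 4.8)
The plan is to mimic exactly the strategy used for Proposition~\ref{prflatR} and Lemma~\ref{P:Randers}: reduce the projective-flatness of $\widetilde F$ to the Hamel equation $\delta_{S_0}\widetilde F=0$, equivalently $d_Jd_{h_0}\widetilde F=0$, and then expand this expression using the Leibniz rules for $d_J$ and $d_{h_0}$ together with the hypotheses already available for $F$ and $\overline F$. Concretely, since $F$ and $\overline F$ are projectively flat we have $d_{h_0}F=0$ and $d_{h_0}\overline F=0$ only after the deformation by their respective projective factors; what we really get to use here is the Hamel equations $\delta_{S_0}F=0$ and $\delta_{S_0}\overline F=0$, i.e. $d_Jd_{h_0}F=0$ and $d_Jd_{h_0}\overline F=0$. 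Also, $f(x,y)=f_i(x)y^i$ is linear in the fibre, so $d_{h_0}d_Jf=0$, and $g$ is a function on $M$, so $d_Jg=0$.

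First I would write $\widetilde F=\bigl(g+\tfrac{f}{F}\bigr)\overline F$ and apply $d_{h_0}$, using $d_Jg=0$ to simplify; this produces a sum of terms involving $d_{h_0}g$, $d_{h_0}f$, $d_{h_0}F$, $d_{h_0}\overline F$ and the functions $g$, $f$, $F$, $\overline F$ with appropriate powers of $F$ in the denominators (coming from differentiating $f/F$ and $1/F$). Next I would apply $d_J$ to the resulting semi-basic $1$-form. Here the key simplifications are: $d_Jd_{h_0}F=0$ and $d_Jd_{h_0}\overline F=0$ kill the ``pure second derivative'' terms; $d_{h_0}d_Jf=0$ handles the term where $d_J$ hits $d_{h_0}f$; and $d_Jg=0$ removes several more. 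What survives is a wedge-product expression which, after contracting with $i_{S_0}$ and using $0$-homogeneity of $g+f/F$ together with $1$-homogeneity of $f$, $F$, $\overline F$ (so that $i_{S_0}d_Jf=f$, $i_{S_0}d_JF=F$, etc.), collapses to precisely the scalar relation \eqref{proj:flat:multiplication}. The step of passing from the $2$-form identity $d_Jd_{h_0}\widetilde F=0$ to the displayed scalar identity is the same ``apply $i_{S_0}$'' trick used at the end of the proof of Lemma~\ref{P:Randers}, and the converse direction is immediate because each step is reversible once one knows $d_Jd_{h_0}\widetilde F$ is a semi-basic $2$-form whose $i_{S_0}$-contraction determines it (for projectively flat sprays the relevant vanishing is equivalent to $\delta_{S_0}\widetilde F=0$).

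The main obstacle I anticipate is purely bookkeeping: keeping track of the numerous terms generated by differentiating the quotient $f/F$ twice (once by $d_{h_0}$, once by $d_J$), and correctly identifying which of them are annihilated by the available identities $d_Jd_{h_0}F=0$, $d_Jd_{h_0}\overline F=0$, $d_{h_0}d_Jf=0$, $d_Jg=0$. In particular the term $-2f\overline F S_0F\,d_JF/F^3$ in \eqref{proj:flat:multiplication} arises from the second derivative of $1/F$, so I would be careful with the chain-rule contributions of $d_J(1/F)=-d_JF/F^2$ and $d_J(1/F^2)=-2\,d_JF/F^3$. No genuinely new idea beyond Lemma~\ref{P:Randers} is needed; the computation is longer only because the multiplier $g+f/F$ is more elaborate than the constant-type rescalings handled before.
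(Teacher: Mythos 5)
Your proposal matches the paper's own proof essentially step for step: the paper also writes $\delta_{S_0}\widetilde F=0$ as $d_Jd_{h_0}\widetilde F=0$, expands by the Leibniz rule, kills terms using $d_Jd_{h_0}F=d_Jd_{h_0}\overline F=0$, $d_Jg=0$ and $d_Jd_{h_0}f=0$, and then contracts with $i_{S_0}$ using homogeneity to land on \eqref{proj:flat:multiplication}. No substantive difference; the remaining work is exactly the bookkeeping you describe.
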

\begin{proof}
We use the same motivation as in the proof of the Lemma \ref{P:Randers}. We compute first the derivative with respect to the horizontal projector associated to the flat spray, which is $h_0$
\begin{equation}
d_{h_0}\widetilde{F}=\overline{F}d_{h_0}g+gd_{h_0}\overline{F}+\dfrac{\overline{F}}{F}d_{h_0}f+\dfrac{f}{F}d_{h_0}\overline{F}-\dfrac{f\overline{F}}{F^2}d_{h_0}F.
\end{equation}
Further
\begin{equation}\label{djdhzero}
\begin{aligned}
d_Jd_{h_0}\widetilde{F}&=d_J\overline{F}\wedge d_{h_0}g+\overline{F}d_Jd_{h_0}g+d_Jg\wedge d_{h_0}\overline{F}+gd_Jd_{h_0}\overline{F}+\dfrac{d_J\overline{F}\wedge d_{h_0}f}{F}+\dfrac{\overline{F}d_Jd_{h_0}f}{F}\\&+\dfrac{\overline{F}d_{h_0}f\wedge d_JF}{F^2}+\dfrac{d_Jf\wedge d_{h_0}\overline{F}}{F}+\dfrac{fd_Jd_{h_0}\overline{F}}{F}+\dfrac{fd_{h_0}\overline{F}\wedge d_JF}{F^2}-\dfrac{\overline{F}d_Jf\wedge d_{h_0}F}{F^2}\\&-\dfrac{fd_J\overline{F}\wedge d_{h_0}F}{F^2}-\dfrac{2f\overline{F} d_{h_0}F\wedge d_JF}{F^3}.
\end{aligned}
\end{equation}
Since $F$ and $\overline{F}$ are projectively flat Finsler metrics it follows that $d_Jd_{h_0}F=d_Jd_{h_0}\overline{F}=0$. Moreover, $g$ is a function that does not depend on the fiber coordinate, hence $d_Jg=0.$ The linearity of the function $f$ ensures that $d_Jd_{h_0}f=0$ and hence from \eqref{djdhzero} it remains:
\begin{equation}
\begin{aligned}
d_Jd_{h_0}\widetilde{F}=&d_J\overline{F}\wedge d_{h_0}g+\dfrac{d_J\overline{F}\wedge d_{h_0}f}{F}+\dfrac{\overline{F}d_{h_0}f\wedge d_JF}{F^2}+\dfrac{d_Jf\wedge d_{h_0}\overline{F}}{F}+\dfrac{fd_{h_0}\overline{F}\wedge d_JF}{F^2}\\&-\dfrac{\overline{F}d_Jf\wedge d_{h_0}F}{F^2}-\dfrac{fd_J\overline{F}\wedge d_{h_0}F}{F^2}-\dfrac{2f\overline{F} d_{h_0}F\wedge d_JF}{F^3}.
\end{aligned}
\end{equation} 
In order to obtain an equivalent condition with \eqref{proj:flat} we apply $i_{S_0}$ in the previous relation and we obtain:
\begin{equation}
\begin{aligned}
\delta_{S_0}\widetilde{F}&=\overline{F}d_{h_0}g-S_0gd_J\overline{F}+\dfrac{\overline{F}}{F}d_{h_0}f-\dfrac{S_0fd_J\overline{F}}{F}+\dfrac{\overline{F}S_0fd_JF}{F^2}-\dfrac{\overline{F}d_{h_0}f}{F}+\dfrac{fd_{h_0}\overline{F}}{F}\\&-\dfrac{S_0\overline{F}d_Jf}{F}+\dfrac{fS_0\overline{F}d_JF}{F^2}-\dfrac{fd_{h_0}\overline{F}}{F}-\dfrac{f\overline{F}d_{h_0}F}{F^2}+\dfrac{\overline{F}S_0Fd_Jf}{F^2}-\dfrac{f\overline{F}d_{h_0}F}{F^2}\\&+\dfrac{fS_0Fd_J\overline{F}}{F^2}-\dfrac{
2f\overline{F}S_0Fd_JF}{F^3}+\dfrac{2f\overline{F}d_{h_0}F}{F^2}.
\end{aligned}
\end{equation}
In conclusion we have that $\widetilde{F}$ is a projectively flat Finsler metric if and only if
\begin{equation}
\begin{aligned}
&\overline{F}d_{h_0}g-S_0gd_J\overline{F}-\dfrac{S_0fd_J\overline{F}}{F}+\dfrac{\overline{F}S_0fd_JF}{F^2}-\dfrac{S_0\overline{F}d_Jf}{F}+\dfrac{fS_0\overline{F}d_JF}{F^2}+\dfrac{\overline{F}S_0Fd_Jf}{F^2}\\&-\dfrac{fS_0Fd_J\overline{F}}{F^2}-\dfrac{
2f\overline{F}S_0Fd_JF}{F^3}=0,
\end{aligned}
\end{equation}
which completes the proof.
\end{proof}
In this case we have two unknown quantities which makes it hard to find the new  projectively flat Finsler metric. This is why we compute first the projective factor associated to the deformation 
\begin{equation}\label{prfactorconform}
\begin{aligned}
P&=\dfrac{S_0\widetilde{F}}{2\widetilde{F}}=\dfrac{S_0g\cdot \overline{F}+gS_0\overline{F}+\dfrac{S_0f}{F}\overline{F}+\dfrac{fS_0\overline{F}}{F}-f\dfrac{\overline{F}}{F^2}S_0F}{2\left(g\overline{F}+\dfrac{f}{F}\overline{F}\right)}
\end{aligned}
\end{equation}
Since $F$ and $\overline{F}$ are two projectively flat Finsler metrics for which $S_0F=2cF^2$ and $S_0\overline{F}=4cF\overline{F}$ from \eqref{prfactorconform} we have:
\begin{equation}
\begin{aligned}
P&=\dfrac{S_0g\cdot \overline{F}+\dfrac{S_0f}{F}\overline{F}-2cf\overline{F}}{2\left(g\overline{F}+\dfrac{f}{F}\overline{F}\right)}+2cF.
\end{aligned}
\end{equation}
We recall that we want to find $g$ and $f$ such that the projective factor $P$ is a Hamel function. Taking into account the conditions imposed on the functions $f$ and $g$ it follows that we can make the following extra assumption 
\begin{equation}\label{assumconform}
S_0g=2cf \text{ and } S_0f=0.
\end{equation} 
With the assumptions considered in \eqref{assumconform} we get that the projective factor associated is 
\begin{equation}
P=2cF,
\end{equation}
which is a Hamel function.\\
Let us verify first that the two equations written in \eqref{assumconform} ensure the projective flatness of the Finsler metric $\widetilde{F}$. Hence we have to check if \eqref{proj:flat:multiplication} is satisfied under this hypothesis. First, let us notice that the only function that satisfies the second equation from \eqref{assumconform} is \begin{equation}
f=\langle v, y\rangle.
\end{equation}
Therefore, $S_0f=y^i\dfrac{\partial f}{\partial x^i}=0$. We still have to compute the following expression
\begin{equation}
\begin{aligned}
&\overline{F}d_{h_0}g-S_0gd_J\overline{F}-\dfrac{S_0\overline{F}d_Jf}{F}+\dfrac{fS_0\overline{F}d_JF}{F^2}+\dfrac{\overline{F}S_0Fd_Jf}{F^2}+\dfrac{fS_0Fd_J\overline{F}}{F^2}-\dfrac{
2f\overline{F}S_0Fd_JF}{F^3}\\&=\overline{F}d_JS_0g-2cfd_J\overline{F}-4cf\overline{F}d_Jf+\dfrac{4cf\overline{F}d_JF}{F}+2c\overline{F}d_Jf+2cfd_J\overline{F}-\dfrac{4cf\overline{F}d_JF}{F}\\&=\overline{F}\left( d_JS_0g-2cd_Jf\right)=0.
\end{aligned}
\end{equation}
The previous expression ensures that the condition from Lemma  \ref{kzero} is satisfied and hence $\widetilde{F}$ is a projectively flat Finsler metric. In order to find the form of $\bar{F}$ we have to find the form of the function $g$ from \eqref{assumconform}. Since
\begin{equation}
y^i\dfrac{\partial g}{\partial x^i}=2cv_iy^i
\end{equation}
it follows that $g(x)=2c\langle v,x\rangle+e.$\\
We can write now the expression for the new metric as follows
\begin{equation}\label{cfczero}
\small
\widetilde{F}=\dfrac{\eta(\sqrt{|y|^2-4c^2\left(|x|^2|y|^2-\langle x,y\rangle^2\right)}+2c\langle x,y\rangle)^2}{(1-4c^2|x|^2)^2\sqrt{|y|^2-4c^2\left(|x|^2|y|^2-\langle x,y\rangle^2\right)}}\cdot\left(2c\langle v,x\rangle+e+\dfrac{\langle v,y\rangle}{\frac{\sqrt{|y|^2-4c^2\left(|x|^2|y|^2-\langle x,y\rangle^2\right)}}{1-4c^2|x|^2}+\frac{2c\langle x,y\rangle}{1-4c^2|x|^2}}\right).
\end{equation}
We notice that the metric \eqref{cfczero} is a projectively flat Finsler metric with the same properties as the one previously constructed. Therefore, the projective factor is proportional to the metric \eqref{Finsler:m} and it is of zero flag curvature. The curvature can be recovered from the fact that the metric previously constructed is projectively related to the flat metric and hence
\begin{equation}
\widetilde{\kappa}\widetilde{F}^2=P^2-S_0P=4c^2F^2-2cS_0F=4c^2F^2-4c^2F^2=0\Rightarrow \widetilde{\kappa}=0.
\end{equation}
In conclusion, the constructions presented in this section are based on the obstruction imposed in Proposition \ref{rem:curvature} on the projective factor. For the construction of the first metric we used the fact that a linear function given by a closed $1$-form is always a Hamel function. Regarding the constructions from Subsection \ref{zero:curv} we can say they were based on the fact that a Finsler function $F$ is always a Hamel function for its geodesic spray or we used the fact that a projectively flat Finsler metric satisfies $\delta_{S_0}F=0$ and hence is a Hamel function for the flat spray. 

\end{document}